\newcommand\Item[1][]{%
  \ifx\relax#1\relax  \item \else \item[#1] \fi
  \abovedisplayskip=0pt\abovedisplayshortskip=0pt~\vspace*{-1.25\baselineskip}}
\newcommand{\red}{\mathsf{red}}
\newtheorem{assumption}[theorem]{Assumption}
\title{Large-Scale Estimation of Dominant Poles of a Transfer Function by an Interpolatory Framework}
\author{
Emre Mengi\thanks{Ko\c{c} University, Department of Mathematics, Rumeli Feneri Yolu 34450, Sar{\i}yer, Istanbul, Turkey, E-Mail: \texttt{emengi@ku.edu.tr}.} 
}
\begin{document}
\maketitle

\begin{abstract}
We focus on the dominant poles of the transfer function of a descriptor system. The transfer function
typically exhibits large norm at and near the imaginary parts of the dominant poles. Consequently,
the dominant poles provide information about the points on the imaginary axis where the ${\mathcal L}_\infty$ 
norm of the system is attained, and they are also sometimes useful to obtain crude reduced-order models.
For a large-scale descriptor system, we introduce a subspace framework to estimate a prescribed 
number of dominant poles. At every iteration, the large-scale system is projected into a small system,
whose dominant poles can be computed at ease. Then the projection spaces are expanded so that the 
projected system after subspace expansion interpolates the large-scale system at the computed dominant
poles. We prove an at-least-quadratic-convergence result for the framework, and provide numerical
results confirming this. On real benchmark examples, the proposed framework appears to be more
reliable than SAMDP [IEEE Trans. Power Syst. 21, 1471-1483, 2006], one of the widely used algorithms 
due to Rommes and Martins for the estimation of the dominant poles.
\end{abstract}

\begin{keywords}
dominant pole, descriptor system, large scale, projection, Hermite interpolation, model order reduction
\end{keywords}

\begin{AMS}
65F15, 93C05, 93A15, 34K17
\end{AMS}

\section{Introduction}
The dominant poles of a descriptor system provide substantial insight into the 
behavior of the transfer function of the system. Here, we consider a descriptor
system with the state-space representation
\begin{equation}\label{eq:state_space}
		E x'(t)	\;	 =	\;	A x(t)		+	B u(t),	\quad\quad
		y(t)	\;	 =	\;	C x(t)	+	D u(t),
\end{equation}
and the transfer function
\begin{equation}\label{eq:trans_func}
	H(s)	:=	C (sE - A)^{-1} B + D,	
\end{equation}
where $A, E \in {\mathbb C}^{n\times n}$, $B \in {\mathbb C}^{n\times m}$,
$C \in {\mathbb C}^{p\times n}$, $D \in {\mathbb C}^{p\times m}$ with $n \geq m$,
$\, n \geq p$. This text deals with the estimation of a few dominant poles of a 
descriptor system of the form (\ref{eq:state_space}) in the large-scale setting when 
the order of the system $n$ is large.

We assume throughout that $L(s) = A - sE$ is a regular pencil,
i.e., $\text{det} \: L(s)$ is not identically equal to zero at all $s$, and
that its finite eigenvalues are simple. If $E$ is singular, it is also assumed
that the descriptor system has index one, that is 0 as an eigenvalue of 
$L_{\mathsf{pal}}(s) = E - sA$ is semi-simple. Many of the ideas in the subsequent discussions
can possibly be generalized even if some of the finite eigenvalues of $L$ are semi-simple, 
but not necessarily simple, excluding the discussions where we analyze the
proposed framework. The index-one assumption is essential, as our interest
in the dominant poles stems partly from estimating ${\mathcal L}_\infty$ norm,
which is usually even not bounded for a system with index greater than one.

It follows from the Kronecker Canonical form \cite{Gantmacher1959} that
there exist invertible matrices $W, V \in {\mathbb C}^{n\times n}$ such that
\begin{equation}\label{eq:Kronecker}
	W^\ast	A	V	=	
	\left[
		\begin{array}{cl}
			\Lambda	&	0	\\
			0		&	I_{n - \widetilde{n}}	\\
		\end{array}
	\right]	=:	\Lambda_A
	\quad	\text{and}		\quad
	W^\ast E V	=
	\left[
		\begin{array}{cc}
			I_{\widetilde{n}}	&	0	\\
			0	&	0	\\	
		\end{array}
	\right]	=:	\Lambda_E,
\end{equation}
where $\Lambda \in {\mathbb C}^{\widetilde{n}\times \widetilde{n}}$ is the diagonal
matrix with the finite eigenvalues $\lambda_1, \dots, \lambda_{\widetilde{n}}$
of $L$ along its diagonal. Indeed, $W$ and $V$ can be expressed as
\[
	W 	=	
	\left[
		\begin{array}{cccc}
			w_1	&	\dots &	w_{\widetilde{n}}	&	W_\infty
		\end{array}
	\right]
	\quad	\text{and}		\quad
	V	=
	\left[
		\begin{array}{cccc}
			v_1	&	\dots &	v_{\widetilde{n}}	&	V_\infty
		\end{array}
	\right],
\]
where $v_j$ and $w_j$ are right and left eigenvectors corresponding to $\lambda_j$
such that $w_j^\ast E v_j = 1$ for $j = 1,\dots, \widetilde{n}$, and 
$W_\infty, V_\infty \in {\mathbb C}^{n \times {(n - \widetilde{n})}}$ have
linearly independent columns.

Using the Kronecker canonical form, transfer function (\ref{eq:trans_func}) can
be rewritten as
\begin{equation}
	\begin{split}
	H(s)		&	=	\;	C (sW^{-\ast} \Lambda_E V^{-1} - W^{-\ast} \Lambda_A V^{-1})^{-1} B + D		\\
			&	=	\;	(CV) ( s \Lambda_E	-	\Lambda_A)^{-1}	(W^\ast B)		+	D			\\
			&	=	\;
			\sum_{j=1}^{\widetilde{n}} \frac{(C v_j) (w_j^\ast B)}{s - \lambda_j}
				+	M_\infty	+	D,			\label{eq:rational}
	\end{split}
\end{equation}
where $M_\infty := -\left( C V_\infty \right) \left(  W^\ast_\infty B \right)  \in {\mathbb C}^{n\times n}$ 
is constant (i.e., independent of $s$) and due to the infinite eigenvalues of $L$.

The poles of the system are the finite eigenvalues of $L$. The dominant ones among them are those
that can cause large frequency response, i.e., the eigenvalues responsible for larger $\| H({\rm i} \omega) \|_2$
at some ${\rm i} \omega$ on the imaginary axis. The formal definition is given below.
\begin{definition}[Dominant Poles]\label{def:DP}
Let us order the finite eigenvalues $\lambda_1, \dots, \lambda_{\widetilde{n}}$ of $L$ 
as $\lambda_{i_1}, \dots, \lambda_{i_{\widetilde{n}}}$ so that
\begin{equation}\label{eq:dp_criterion}
	\frac{ \| C v_{i_1} \|_2 \| w_{i_1}^\ast B \|_2 }{| {\rm Re} \: \lambda_{i_1} |}
		\;\; \geq \;\;
	\frac{ \| C v_{i_2} \|_2 \| w_{i_2}^\ast B \|_2 }{| {\rm Re} \: \lambda_{i_2} |}
		\;\; \geq \;\;
		\dots
		\;\; \geq \;\;
	\frac{ \| C v_{i_{\widetilde{n}}} \|_2 \| w_{i_{\widetilde{n}}}^\ast B \|_2 }{| {\rm Re} \: \lambda_{i_{\widetilde{n}}} |}.
\end{equation}
The eigenvalue $\lambda_{i_j}$ is called the $j$th dominant pole of the system in (\ref{eq:state_space}).
We refer to the first dominant pole as simply the dominant pole.
\end{definition}
There are other definitions of dominant poles employed in the literature. For instance, in \cite{Antoulas2005} 
and \cite{RS2008} the dominant poles are defined based on the orderings of the finite eigenvalues 
according to $1/|{\rm Re} \: \lambda_j|$  and $\| C v_j \|_2 \| w_j^\ast B \|_2$
for $j = 1,\dots, \widetilde{n}$, respectively. It should however be noted that Definition \ref{def:DP} for the dominant 
poles that we rely on throughout this text is the one that is most widely used in the literature.
This definition also appears to be the meaningful one for instance
for model order reduction and for the estimation of $\omega \in {\mathbb R}$ where $\| H({\rm i} \omega) \|_2$
exhibits large peaks.

\subsection{Motivation}\label{sec:motivation}
A reduced order model can be obtained for (\ref{eq:state_space}) based on
the transfer function
\[
	H_{\red}(s)
		\;	=	\;
	\sum_{j=1}^{r} \frac{(C v_{i_j}) (w_{i_j}^\ast B)}{s - \lambda_{i_j}}
				+	M_\infty	+	D
\]
for a prescribed positive integer $r < \widetilde{n}$. Indeed, assuming (\ref{eq:state_space})
is asymptotically stable, the ${\mathcal H}_\infty$-norm error for this reduced order model is
\[
	\| H - H_\red \|_{{\mathcal H}_\infty}
		\;	=	\;
	\sup_{\omega \in {\mathbb R}}  \:
	\left\|
	\sum_{j=r +1}^{\widetilde{n}} \frac{(C v_{i_j}) (w_{i_j}^\ast B)}{{\rm i} \omega - \lambda_{i_j}}	\right\|_2
		\;	\leq	\;
	\sum_{j=r +1}^{\widetilde{n}} \frac{ \| C v_{i_j} \|_2  \| w_{i_j}^\ast B \|_2 }{ \left| {\rm Re} \: \lambda_{i_j} \right| }.
\]
This is known as modal model reduction. As noted in \cite[Section 9.2]{Antoulas2005}, the convergence with 
respect to the order of the reduced model may be slow. Still, if a few dominant poles can be estimated at ease,
this approach provides a crude low order approximation.

But our motivation for the estimation of the dominant poles is mainly driven from the computation of ${\mathcal L}_\infty$
norm of a descriptor system. The ${\mathcal L}_\infty$ norm for (\ref{eq:state_space}) is defined by
\begin{equation}\label{eq:Linfinity}
	\| H \|_{{\mathcal L}_\infty}	\;	:=	\;	
			\sup_{\omega \in {\mathbb R}} \: \| H({\rm i} \omega) \|_2
				\;	=	\;
			\sup_{\omega \in {\mathbb R}} \: \sigma_{\max} (C ({\rm i} \omega E - A)^{-1} B + D),
\end{equation}
where $H$ is the transfer function in (\ref{eq:trans_func}), and $\sigma_{\max}(\cdot)$
denotes the largest singular value of its matrix argument. If the system is asymptotically stable with poles
on the open left-half in the complex plane, then the ${\mathcal L}_\infty$ norm is the same as the ${\mathcal H}_\infty$ norm of 
the system, which can be expressed as an optimization problem as in (\ref{eq:Linfinity}) but with the 
supremum over the right-half of the complex plane rather than over the imaginary axis. The 
${\mathcal H}_\infty$ norm of the system is an indicator of robust stability, as indeed its reciprocal is equal to a 
structured stability radius of the system \cite{HinP86b, hinrichsen2011mathematical}. Partly due to these robust
% \cite{HinP86a, HinP86b, hinrichsen2011mathematical}. Partly due to these robust
stability considerations, if the system has design parameters, it is desirable to minimize the ${\mathcal H}_\infty$ norm 
of the system over the space of parameters; see e.g., \cite{VarP01, VizMPL16, Aliyev2020b}
and references therein. A related problem is the ${\mathcal H}_\infty$-norm model reduction
problem \cite{Antoulas2005}, which concerns finding a nearest reduced order system 
with prescribed order with respect to the ${\mathcal H}_\infty$ norm. Such minimization tasks 
involving ${\mathcal H}_\infty$ norm in the objective may require a few ${\mathcal H}_\infty$-norm calculations. 
There are various approaches that
are tailored for the estimation of the ${\mathcal L}_\infty$ norm of a large-scale descriptor system;
see for instance \cite{Guglielmi2013, Benner2014, Freitag2014, MitO15B, BennerM2018}.
However, the optimization problem in (\ref{eq:Linfinity}) is nonconvex, and
these algorithms converge to local maximizers of the singular value function in (\ref{eq:Linfinity}),
that are not necessarily optimal globally. This is for instance the case with our recent
subspace framework \cite{Aliyev2017} for large-scale ${\mathcal L}_\infty$-norm estimation,
which starts with an initial reduced order model whose transfer function interpolates the transfer
function (\ref{eq:trans_func}) of the original large-scale system at prescribed points. If these locally 
convergent algorithms are started with good initial points, in the case of \cite{Aliyev2017} good interpolation 
points, close to a global maximizer of the singular value function in (\ref{eq:Linfinity}), then 
convergence to this global maximizer occurs meaning that the ${\mathcal L}_\infty$ norm is 
computed accurately.

Good candidates for these initial points are provided by the imaginary parts of the dominant 
poles of (\ref{eq:state_space}), as global maximizers of the singular value function are typically
close to the imaginary parts of dominant poles.
This fact is illustrated in Figure \ref{fig:sval_DP},
where on the left and on the right the imaginary parts of the most dominant five and seven 
poles of the systems are depicted on the horizontal axis with crosses. They are quite close to local 
maximizers where the singular value function exhibits highest peaks. Moreover, in 
both plots in Figure \ref{fig:sval_DP}, one of these local maximizers is indeed a global maximizer.

We propose to use the imaginary parts of the dominant poles estimated by the
approach introduced here for the initialization of the algorithms for large-scale ${\mathcal L}_\infty$-norm 
computation, e.g., as the initial interpolation points for the subspace framework of \cite{Aliyev2017}.

	\begin{figure}
		\begin{tabular}{cc}
			\hskip -5ex
					\includegraphics[height=.26\textheight]{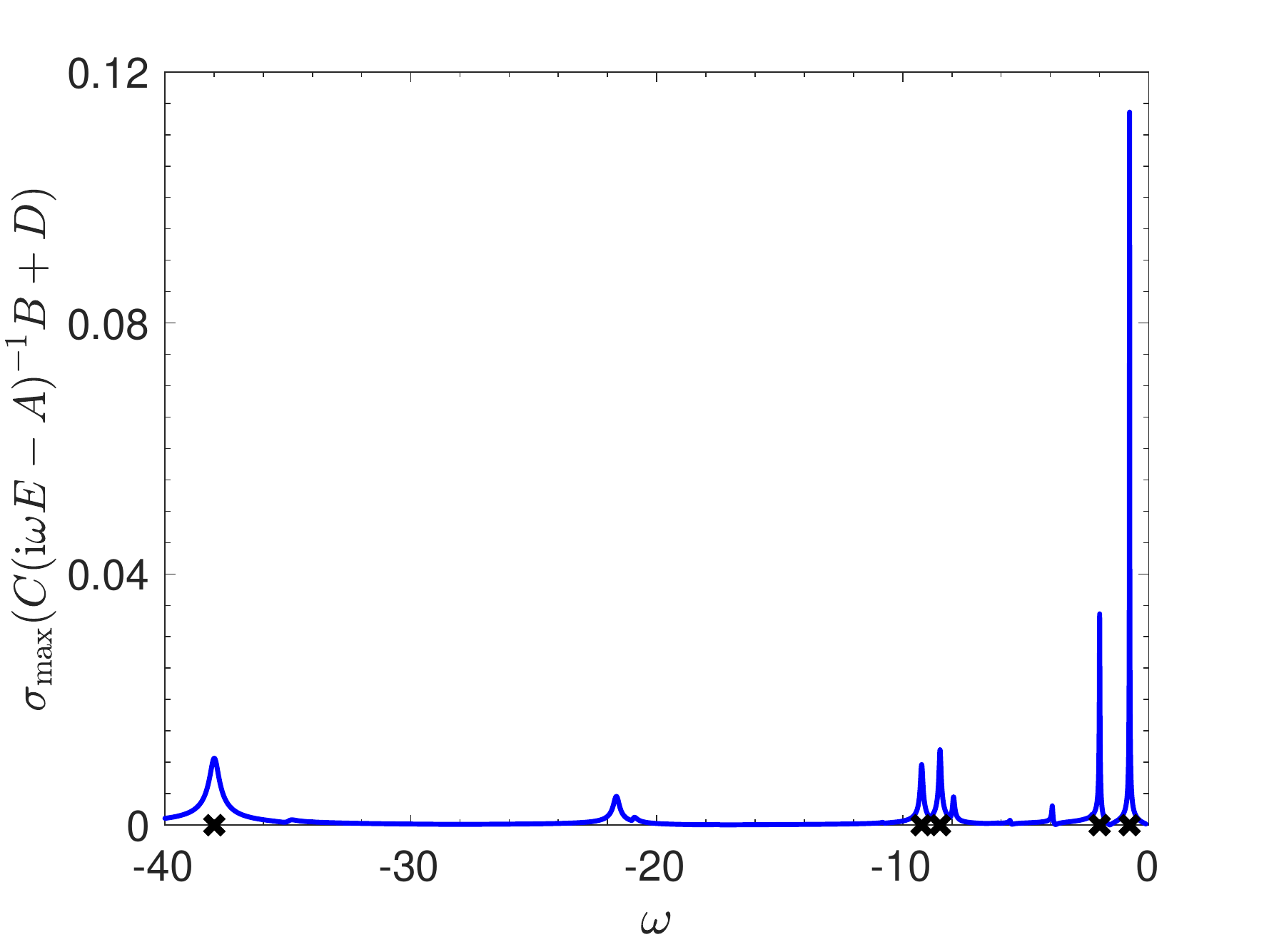} 	&	
					\hskip -6ex
							\includegraphics[height=.26\textheight]{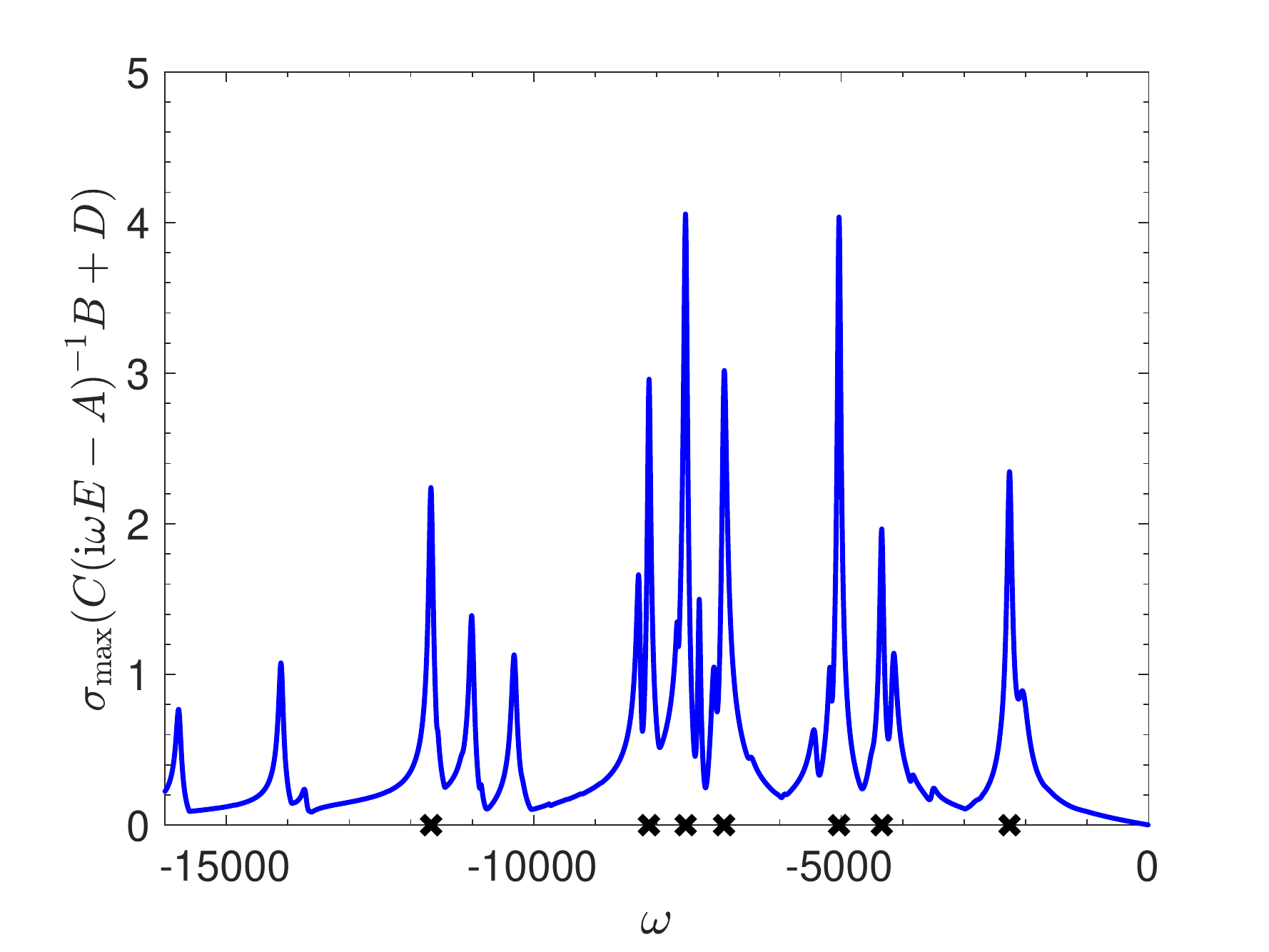}	
		\end{tabular}f
		\caption{The plots of $\sigma_{\max} (C ({\rm i} \omega E - A)^{-1} B + D)$
		as a function of $\omega$ for the \texttt{iss} system of order 270 (left) and \texttt{M10PI}$\_$\texttt{n}
		system of order 625 (right) available in the SLICOT library. The black crosses mark the imaginary parts of the
		first five (left) and first seven (right) dominant poles of the systems.}
		\label{fig:sval_DP}
	\end{figure}
	
\subsection{Literature and Our Approach}	
Several approaches have been proposed for the estimation of the dominant poles of a descriptor 
system in the literature. Most of these approaches stem from the dominant pole algorithm (DPA) \cite{MLP1996}, 
which is originally for the standard single-input-single-output LTI systems (i.e., for the descriptor systems 
as in (\ref{eq:state_space}) but with $E = I$ and $m =  p = 1$), and which is inspired 
from the Rayleigh-quotient  iteration to find an eigenvalue of $A$. DPA is meant to locate only one dominant pole.
This is later generalized to locate several dominant poles in \cite{Martins1997}. The generalized
algorithm is referred as the dominant pole spectrum eigensolver (DPSE), and can be viewed as
a simultaneous Rayleigh-quotient iteration. An extension that keeps all of the previous directions
generated by DPA and employs them for projections is described in \cite{RM2006}.
The original DPA algorithm is later adapted for multiple-input-multiple-output systems in \cite{MQ2003},
and a variant that employs all of the previous directions for projection is introduced in \cite{RM2006b}.
The extensions of these algorithms for descriptor systems with an arbitrary $E$, possibly singular, is quite straightforward;
see for instance the survey paper \cite{Rommes2008}. We also refer to \cite{RS2008} for a convergence
analysis of DPA and its variants in the single-input-single-output case. 

The subspace framework that we propose here differs from all of the existing methods for dominant
pole estimation in two major ways. First, our approach performs projections on the state-space representation, 
similar to those employed by model reduction techniques. Secondly, our approach is interpolatory. At every iteration, 
our approach computes the dominant poles of a projected system. Then it expands projection subspaces 
so as to achieve certain Hermite interpolation properties between the original system and the projected 
system at these dominant poles of the projected system. The satisfaction of the Hermite interpolation
properties is the reason for the quick convergence of the proposed framework at least at a quadratic rate
under mild assumptions, which we prove in theory and illustrate in practice on real examples.

The proposed framework is partly inspired from our previous work for ${\mathcal L}_\infty$-norm
computation \cite{Aliyev2017}. However, devising a rapidly converging subspace framework
for dominant pole estimation and establishing its quick convergence come with additional
challenges. By making an analogy with quasi-Newton methods,
in \cite{Aliyev2017} we maximize over the imaginary axis the largest singular value of the 
reduced transfer function, viewed as a model function, rather than the full transfer function. In the context here, 
there is no clear objective to be maximized. Instead, the quantities that 
we would like to compute, i.e., dominant poles, are hidden inside the transfer function. Thus, to explain 
the quick convergence and provide a formal rate-of-convergence argument, we devise a function 
(in particular $f$ in (\ref{eq:defn_f})) % analytic under mild assumptions 
over the complex plane whose 
minimizers are poles of $H(s)$, and view its reduced counterparts as model functions. 
The devised function and their reduced counterparts are tailored as analytic functions near the poles
under mild assumptions; the rate-of-convergence analysis we present would not be applicable without
such smoothness features. A second critical 
issue is that all the interpolation points and analysis in \cite{Aliyev2017} are restricted to the imaginary axis, 
while, in the current work, all of the developments have to be over the whole complex plane. Tailoring a 
subspace framework over the complex plane requires care. For instance, it seems essential to interpolate 
at least the first two derivatives of the transfer functions in order to attain superlinear convergence, 
unlike the framework in \cite{Aliyev2017} operating on the imaginary axis for which interpolating
first derivatives suffices for superlinear convergence. The proposed framework has also similarities
with those in \cite{Aliyev2020b} and \cite{AMV2020} for the minimization of the ${\mathcal H}_\infty$ norm
and for nonlinear eigenvalue problems. But again there are remarkable differences in the ways
interpolation is performed, and in the rate-of-convergence analyses explaining quick convergence; 
to say the least, in those works the analytic functions on which the analyses operate on are
readily available, and quite different than the one we set up here.

\subsection{Contributions and Outline}
We introduce the first interpolatory subspace framework for the estimation of
the dominant poles of a large-scale descriptor system. The framework appears
to be more reliable than the existing methods. On benchmark examples
the framework proposed here typically returns dominant poles with larger dominance
metrics compared to those dominant poles returned by the method in \cite{RM2006b},
commonly employed today for dominant pole estimation. We prove rigorously
that the rate of convergence of the framework is at least quadratic  with
respect to the number of subspace iterations, which is also confirmed
in practice on benchmark examples. The proposed framework is implemented
rigorously, and this implementation is made publicly available. The interpolation
result presented here for the transfer functions of descriptor systems (i.e., Lemma \ref{thm:main_interpolation}) 
generalizes the interpolation results in our previous works \cite[Lemma 3.1]{Aliyev2017}, \cite[Lemma 2.1]{AMV2020}.

Our presentation is organized as follows. The next section concerns an application of the dominant poles for 
estimating the stability radius of a linear dissipative Hamiltonian system, 
a problem closely connected to ${\mathcal L}_\infty$ norm.
We describe the interpolatory subspace framework to compute a prescribed number of
dominant poles in Section \ref{sec:framework}. In Section \ref{sec:Rconv},
the rate of convergence of the proposed subspace framework to compute the most dominant pole 
is analyzed. Section \ref{sec:prac_details} is devoted to the practical details that have to be
taken into consideration in an actual implementation of the framework such as initialization and
termination criterion. The proposed framework is tested numerically in Section \ref{sec:NR}
on real benchmark examples used in the literature for dominant pole estimation and model
order reduction. In this numerical experiments section, comparisons of the framework with the subspace 
accelerated multiple-input-multiple-output dominant pole algorithm \cite{RM2006b} are reported.

\section{Stability Radius of a Linear Dissipative Hamiltonian System}\label{sec:stab_rad}
Computation of the ${\mathcal L}_\infty$-norm (\ref{eq:Linfinity}) for a large-scale system
efficiently and with high accuracy is still not fully addressed today. 

The level-set method due to Boyd and Balakrishnan \cite{Boyd1990}, Bruinsma and Steinbuch \cite{Bruinsma1990}
is extremely reliable and accurate. It is still the method to be used for a small-scale system.
Unfortunately, it is not meant for large-scale systems, as, for a system of order $n$, it
requires the calculation of all imaginary eigenvalues of $2n\times 2n$ matrices. The only
other option is to use the locally convergent algorithms such as \cite{Guglielmi2013, Benner2014, Freitag2014, MitO15B, Aliyev2017, BennerM2018}.
These algorithms are better suited to cope with the large-scale setting,
but, especially when the singular value function in (\ref{eq:Linfinity}) has many local
maximizers, there is a decent chance that they will converge to a local
maximizer that is not optimal globally. A particular problem that is connected to 
the computation of an ${\mathcal L}_\infty$ norm is the stability radius of a linear 
dissipative Hamiltonian (DH) system.

A linear DH system is a linear autonomous control system of the form
\[
	x'(t)	\;	=	\;	(J-R) Q x(t),
\]
where $J, R, Q \in {\mathbb C}^{\widetilde{n} \times \widetilde{n}}$ are constant matrices such that $J^\ast = -J$,
$R^\ast = R$, $Q^\ast = Q$, and $R$, $Q$ are positive semidefinite, positive definite, respectively.
Various applications in science and engineering give rise to linear DH systems \cite{JacZ12, SchJ14, GraMQSW16}.
A linear DH system is always Lyapunov stable, that is all of the eigenvalues of $(J-R)Q$ are contained
on the closed left-half plane and its eigenvalues on the imaginary axis (if there is any) are semi-simple.
However, unstructured perturbations of $J, R$ and/or $Q$ may result in unstable systems with
eigenvalues whose real parts are positive. In the presence of uncertainties on the matrix $R$, 
for given restriction matrices $B \in {\mathbb C}^{\widetilde{n}\times \widetilde{m}}, \, C \in {\mathbb C}^{\widetilde{p}\times \widetilde{n}}$ with $\widetilde{m} \leq \widetilde{n}$, 
$\, \widetilde{p} \leq \widetilde{n}$ on the perturbations of $R$ due to uncertainties, the stability radius
\[
	r(R; B,C) := \inf \{\|\Delta\|_2 \; | \; \Delta\in \mathbb{C}^{\widetilde{m} \times \widetilde{p}},  \:
	  \mathrm{\Lambda} \left( (J - \left(R+B\Delta C\right)) Q \right) \cap {\rm i}\mathbb{R} \neq \emptyset\}
\]
is proposed in \cite{MehMS16a}, where $\Lambda(\cdot)$ denotes the spectrum of its matrix argument, and ${\rm i} {\mathbb R}$
the set of purely imaginary numbers. It is also proven in \cite{MehMS16a} that $r(R; B,C)$ can be characterized as
\begin{equation}\label{eq:DH_char}
	r(R; B,C)
		\;	=	\;
		\frac{1}{\sup_{\omega\in {\mathbb R}} h(\omega)},	\quad\quad
		h(\omega) 	\: 	:= 	\:	 \sigma_{\max} (CQ ({\rm i} \omega I - (J-R)Q)^{-1} B)
\end{equation}
provided that $r(R; B,C)$ is finite. Hence,
$r(R; B, C)$ is the reciprocal of a special ${\mathcal L}_\infty$ norm as in (\ref{eq:Linfinity}) with
$CQ$, $(J-R)Q$ taking place of $C$, $A$ and $E = I$, $D = 0$.

The singular value function in (\ref{eq:DH_char}) has usually many local maximizers especially when $R$
has low rank. 
% When $n$ is large, one can apply one of the locally convergent algorithms for ${\mathcal L}_\infty$-norm 
% computation to estimate $r(R; B,C)$. However,  the chances that they will terminate at local minimizers 
% of (\ref{eq:DH_char}) that are not optimal globally are not low when $R$ has low rank. 
As argued in Section \ref{sec:motivation}, a remedy to convergence to local maximizers that are not optimal globally
is to use the dominant poles of the associated system
\begin{equation}\label{eq:DHsys}
		 x'(t)	\;	 =	\;	(J-R)Q x(t)		+	B u(t),	\quad\quad
		y(t)	\;	 =	\;	CQ x(t)
\end{equation}
for initialization. To illustrate this point, we compute $r(R; B, C)$ for 3000 random linear DH systems with $\widetilde{n} =500$, 
$\widetilde{m} = \widetilde{p} = 2$\footnote{Such random linear DH systems can be generated using the Matlab routine \texttt{randomDH} made available
on the web at \url{https://zenodo.org/record/5103430}, specifically with the command \texttt{randomDH(500,2,2)}.}
 using the subspace framework \cite{Aliyev2017} and the framework \cite{Aliyev2020}, the variant that respects 
the DH structure, both of which are locally convergent. For all of these DH systems, $R$ is constrained to have rank in $[10,50]$,
and the singular value function $h(\omega)$ in (\ref{eq:DH_char}) has typically at least thirty local maximizers.
As for the initial interpolation points, we consider the following two possibilities: \\[.4em]
\begin{tabular}{l}
	\textbf{(equally-spaced points)} 10 points among 40 equally-spaced points in $[-1200, 0]$ \\[.2em]
	(which contains a global maximizer of $h(\omega)$) yielding the largest value of $h(\omega)$; \\[.3em]
	\textbf{(dominant poles)} 10 points among the imaginary parts of the 40 most dominant \\[.2em]
	poles of (\ref{eq:DHsys}) that yield the largest value of $h(\omega)$.
\end{tabular}	

\medskip

\noindent
Four approaches are compared in Table \ref{tab:DH_stabradii}. The BB, BS algorithm in the last line of the table refers to the level-set 
method due to Boyd and Balakrishnan \cite{Boyd1990}, Bruinsma and Steinbuch \cite{Bruinsma1990}.
Even the subspace frameworks \cite{Aliyev2017, Aliyev2020} benefit from this level-set method to solve the small projected problems. 
We remark that $n = 500$ is relatively small so that the BB, BS algorithm is applicable, in particular
we can verify the correctness of the results computed by the frameworks by comparing them with the results returned by the BB, BS algorithm. 
It is apparent from the second column in Table \ref{tab:DH_stabradii} that \cite{Aliyev2020} initialized with dominant poles is 
considerably more accurate than \cite{Aliyev2017, Aliyev2020} using equally-spaced points. The runtimes
reported in the last column include the time for initialization, in particular the time for the computation of the dominant poles
in the third line. Observe that \cite{Aliyev2020} with dominant poles is substantially faster than direct applications 
of the level-set method \cite{Boyd1990, Bruinsma1990}, and nearly as accurate as the level-set method.

\begin{table}
\caption{ Comparison of the subspaces frameworks \cite{Aliyev2020} with dominant poles and \cite{Aliyev2017, Aliyev2020}
with equally-spaced points on 3000 random linear DH systems in Section \ref{sec:stab_rad}. Accuracy refers to the 
percentage of the results that differ from the results by the BB, BS algorithm by an amount less than $10^{-8}$.} 
 \label{tab:DH_stabradii}
 \begin{center}
    	\begin{tabular}{l|c|cc|cc}
								 		&						&	 \multicolumn{2}{c}{$\#$ iterations}				&	 \multicolumn{2}{c}{time in s}	\\
		\phantom{aaaaaaaaa} method			&	accuracy				&		mean & median							&		mean & median		\\
						\hline
\cite{Aliyev2017} with equally-spaced points 		&	 $\%$ 84.93			&		17.90 & 13		&		\phantom{1}1.86  &  	\phantom{1}0.69		\\[.3em]
\cite{Aliyev2020} with equally-spaced points 		&	 $\%$ 91.93			&		12.90 & 11		&		\phantom{1}2.54  &  	\phantom{1}1.25		\\[.3em]
\cite{Aliyev2020} with dominant poles			&	 $\%$ 99.43			&		10.52 &  9			&		\phantom{1}1.91  &	\phantom{1}1.00		\\[.3em]
BB, BS Algorithm \cite{Boyd1990, Bruinsma1990}	&	 $\%$ 100				&		---	 &  ---		&		\phantom{1}6.61  &	\phantom{1}6.39		\\
    	\end{tabular}
\end{center}
\end{table}

\section{The Proposed Subspace Framework}\label{sec:framework}
Some of the most widely-used approaches for model order reduction of descriptor systems perform
Petrov-Galerkin projections. In the Petrov-Galerkin framework, given two subspaces ${\mathcal V}$, ${\mathcal W}$
and matrices $V$, $W$ whose columns form orthonormal bases for these subspaces, system (\ref{eq:state_space})
is approximated by
\[
		W^\ast E V x_\red'(t)	\;	 =	\;	W^\ast A V x_\red(t)		+	W^\ast B u(t),	\quad\quad
		y(t)	\;	 =	\;	C V x_\red(t)	+	D u(t).
\]
Note that this reduced order system is obtained from (\ref{eq:state_space}) by restricting the state space to
${\mathcal V}$ (i.e., by replacing $x(t)$ with $V x_\red(t)$, which is merely an approximation) and imposing the 
orthogonality of the resulting residual of the differential part to ${\mathcal W}$. Moreover, it is important that
the dimensions of ${\mathcal V}$ and ${\mathcal W}$ are the same, so that $W^\ast E V$ and $W^\ast A V$ are
square matrices, and poles of the reduced system are the eigenvalues of a square pencil, just like the
original system.

Interpolation is a plausible strategy for the construction of the subspaces; ${\mathcal V}$, ${\mathcal W}$ can be
formed so that the transfer function of the reduced system  
\begin{equation}\label{eq:red_transfer_function}
	H^{{\mathcal W}, {\mathcal V}}_\red(s)	:=	CV (sW^\ast E V - W^\ast A V)^{-1} W^\ast B + D
\end{equation}
Hermite interpolates the transfer function $H(s)$ as in (\ref{eq:trans_func}) of the original system at prescribed points.
The following result indicates how Hermite interpolation properties between the full 
and reduced transfer functions can be attained at prescribed points. Note that $I_m$ and $I_p$ stand for $m\times m$
and $p\times p$ identity matrices, respectively.   % \cite[Theorem 1]{Gugercin2009}
\begin{lemma}\label{thm:main_interpolation}
Let $\mu \in {\mathbb C}$ be a point that does not belong to the spectrum of $L(s) = A - s E$.
Furthermore, let ${\mathcal W}_{\rm up} = {\mathcal W} \oplus {\mathcal W}_\mu$ and 
${\mathcal V}_{\rm up} = {\mathcal V} \oplus {\mathcal V}_\mu$ for two subspaces ${\mathcal V}, {\mathcal W}$
of equal dimension, and ${\mathcal V}_\mu$, ${\mathcal W}_\mu$ defined as 
		\begin{equation*}
		   \begin{split}
			{\mathcal V}_\mu  & \;  :=  \; \bigoplus_{j=0}^{q} \: \mathrm{Ran} \left[ \left( \left\{ (A - \mu E)^{-1} E \right\}^j (A - \mu E)^{-1} B \right) P_R(\mu) \right] \: , \\
%							\;\; \text{and} 			\\
			 {\mathcal W}_\mu  &  \; :=  \; \bigoplus_{j=0}^{q} \: \mathrm{Ran} \left[ \left( C (A - \mu E)^{-1} \left\{ E (A - \mu E)^{-1} \right\}^j \right)^\ast P_L(\mu) \right]
		   \end{split}
		\end{equation*}
for some integer $q \geq 0$, and
\begin{equation}\label{eq:defn_PR_PL}
	P_{R}(\mu)
		\;	=	\;
	\left\{
	\begin{array}{cc}
		I_m		\quad\quad	&		\text{if} \;\; m \leq	p	\\[.2em]
		H(\mu)^\ast	\quad \quad	&	\text{if} \;\;  m	>	p
	\end{array}
	\right.	\;\;	,		\quad\quad
	P_{L}(\mu)
		\;	=	\;
	\left\{
	\begin{array}{cc}
		I_p		\quad\quad	&		\text{if} \;\;  p \leq	m	\\[.2em]
		H(\mu)	\quad \quad	&	\text{if} \;\;  p	>	m
	\end{array}
	\right.	\;	.
\end{equation}
If $\mu$ is not a pole of $H^{{\mathcal W}_{\rm up}, {\mathcal V}_{\rm up}}_\red$, then we have \\[-.5em]
	\begin{enumerate}
		\item \hskip -.7ex
		$H(\mu) = H^{{\mathcal W}_{\rm up}, {\mathcal V}_{\rm up}}_\red(\mu)$,			
		\vskip .5ex
		\item \hskip -.7ex 
		$H^{(j)}(\mu) = \big[ H^{{\mathcal W}_{\rm up}, {\mathcal V}_{\rm up}}_\red \big]^{(j)} (\mu) \:$	
		for $j = 1,\dots, q$,  $\;$ and
		\vskip .5ex
		\item \hskip -.7ex 
		$P_L(\mu)^\ast H^{(j)}(\mu) P_R(\mu) = P_L(\mu)^\ast \big[ H^{{\mathcal W}_{\rm up}, {\mathcal V}_{\rm up}}_\red \big]^{(j)} (\mu) P_R(\mu)$	
		for $j = q+ 1,\dots, 2q + 1$,
	\end{enumerate}
\vskip .5ex
where $H^{(j)}$, $\big[ H^{{\mathcal W}_{\rm up}, {\mathcal V}_{\rm up}}_\red \big]^{(j)}$ denote the $j$th derivatives of 
$H$, $H^{{\mathcal W}_{\rm up}, {\mathcal V}_{\rm up}}_\red$, respectively.	
\end{lemma}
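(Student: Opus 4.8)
The plan is to reduce everything to the classical rational-Krylov interpolation result for a single matrix-valued resolvent, then account for the block structure coming from $B$, $C$, and the rectangular‐block factors $P_R(\mu)$, $P_L(\mu)$. First I would rewrite the reduced resolvent using the projected pencil. Since $V$, $W$ have orthonormal columns, writing $V_{\rm up}$, $W_{\rm up}$ for bases of ${\mathcal V}_{\rm up}$, ${\mathcal W}_{\rm up}$, the reduced transfer function is $H^{{\mathcal W}_{\rm up},{\mathcal V}_{\rm up}}_\red(s) = CV_{\rm up}(sW_{\rm up}^\ast E V_{\rm up} - W_{\rm up}^\ast A V_{\rm up})^{-1}W_{\rm up}^\ast B + D$. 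The term $D$ matches trivially and all derivatives of order $\geq 1$ ignore it, so it suffices to handle the strictly proper part $G(s) := C(sE-A)^{-1}B$ and its reduced counterpart $G_\red(s)$.

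The core is a Taylor-expansion/moment-matching argument about the fixed point $\mu$. Expanding $(sE-A)^{-1}$ around $\mu$ gives, for $s$ near $\mu$,
\begin{equation*}
(sE-A)^{-1} = \sum_{j\geq 0} (\mu - s)^{j}\,\big[(A-\mu E)^{-1}E\big]^{j}(A-\mu E)^{-1},
\end{equation*}
so $G^{(j)}(\mu)$ is (up to a known scalar factor) $C\big[(A-\mu E)^{-1}E\big]^{j}(A-\mu E)^{-1}B$, and likewise for $G_\red$ with the projected matrices. The key observation is that the right block $\{(A-\mu E)^{-1}E\}^{j}(A-\mu E)^{-1}B$, for $j=0,\dots,q$, lies in ${\mathcal V}_\mu \subseteq {\mathcal V}_{\rm up}$ after multiplication by $P_R(\mu)$ on the right — so these columns are represented exactly in the reduced space — and symmetrically for the left block in ${\mathcal W}_\mu \subseteq {\mathcal W}_{\rm up}$. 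I would then invoke (or re-derive in two lines) the standard fact: if the first $q+1$ right rational-Krylov blocks at $\mu$ lie in the right projection space and the first $q+1$ left blocks lie in the left projection space, then the projected resolvent matches the first $2q+2$ "two-sided" moments, i.e.\ $Z_L^\ast G^{(j)}(\mu) Z_R = Z_L^\ast G_\red^{(j)}(\mu) Z_R$ for $j=0,\dots,2q+1$, where $Z_R$, $Z_L$ are whatever matrices were appended (here $P_R(\mu)$, $P_L(\mu)$), while the stronger one-sided matching $G^{(j)}(\mu)=G_\red^{(j)}(\mu)$ holds for $j=0,\dots,q$. This is exactly the trichotomy in items 1–3: when $m\le p$ we have $P_R(\mu)=I_m$, so the right block is in ${\mathcal V}_\mu$ without modification and the full (not merely compressed) derivatives match up to order $q$ — that is item 2 — and the extra factor only survives on the high-order terms $q+1,\dots,2q+1$, giving item 3; when $m>p$ the roles flip via $P_L(\mu)=I_p$ and $P_R(\mu)=H(\mu)^\ast$, and one checks the compressed statement is symmetric in the two cases. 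Item 1 is just the $j=0$ instance of item 2 together with the observation that when $m>p$ the $j=0$ compressed identity $P_L(\mu)^\ast(G(\mu)-G_\red(\mu))P_R(\mu)=0$ with $P_L(\mu)^\ast = H(\mu)=G(\mu)+D$ still forces $G(\mu)=G_\red(\mu)$ because one also separately knows the left block at $j=0$ is captured.

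\textbf{Main obstacle.}
The delicate point is not the moment-matching bookkeeping but justifying that the projected pencil is actually invertible at and near $\mu$ so that the Taylor expansion of $G_\red$ exists — this is precisely why the hypothesis "$\mu$ is not a pole of $H^{{\mathcal W}_{\rm up},{\mathcal V}_{\rm up}}_\red$" is imposed, and I would use it together with $\mu\notin\Lambda(L)$ to legitimize expanding $(sW_{\rm up}^\ast EV_{\rm up}-W_{\rm up}^\ast AV_{\rm up})^{-1}$ around $\mu$ in the first place. The second subtlety is the asymmetric treatment of $m$ versus $p$: I would handle $m\le p$ in full detail and then argue the case $m>p$ follows by applying the $m\le p$ result to the "dual" system $(A^\ast, E^\ast, C^\ast, B^\ast, D^\ast)$, whose transfer function is $H(\bar s)^\ast$, after checking that the spaces ${\mathcal V}_\mu$, ${\mathcal W}_\mu$ swap roles correctly under this duality — this avoids redoing the computation and makes the $H(\mu)^\ast$ versus $H(\mu)$ discrepancy transparent. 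Everything else is a routine but somewhat lengthy induction on $j$ tracking how each appended block kills one more order of the error in the two-sided sense.
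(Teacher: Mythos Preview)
Your proposal is correct and follows essentially the same route as the paper: identify the derivative blocks $\{(A-\mu E)^{-1}E\}^{j}(A-\mu E)^{-1}B\,P_R(\mu)$ and their left analogues as lying in $\mathcal V_\mu$, $\mathcal W_\mu$, invoke the standard tangential-interpolation result (the paper cites \cite[Theorem~1]{Gugercin2009} rather than re-deriving it), and then do a case split on the relation between $m$ and $p$. The paper treats the three cases $m=p$, $m>p$, $m<p$ directly and symmetrically; your suggestion to handle $m\le p$ in detail and obtain $m>p$ by duality with $(A^\ast,E^\ast,C^\ast,B^\ast,D^\ast)$ is a harmless stylistic variation.

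One small slip to correct when you write it out: in the case $m>p$ you wrote $P_L(\mu)^\ast = H(\mu)$, but in fact $P_L(\mu)=I_p$ there, so the left tangential conditions already range over all of $\mathbb C^p$ and yield full matrix interpolation $H^{(j)}(\mu)=\big[H_\red^{\mathcal W_{\rm up},\mathcal V_{\rm up}}\big]^{(j)}(\mu)$ for $j=0,\dots,q$ directly---no extra argument is needed to ``force'' item~1 from a compressed identity. Your closing clause (``the left block at $j=0$ is captured'') is the right reason; the preceding sentence about $P_L(\mu)^\ast=H(\mu)$ should simply be dropped.
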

\begin{proof}
For every $j \in \{ 0 , \dots , q \}$, we have
\begin{equation*}
	\begin{split}
		&
			 \frac{d^j}{ds^j}  \left\{ (A - s E)^{-1} B \right\}	\bigg|_{s = \mu} 
		 	\:		=	\:
				\left\{ (A - \mu E)^{-1} E \right\}^j (A - \mu E)^{-1} B 	\,	,		 \quad	\;\;		\text{and}		\\[.2em]
		&
			 \frac{d^j}{ds^j}  \left\{ C (A - s E)^{-1}   \right\} \bigg|_{s = \mu} 
		 	\:		=	\:
				C (A - \mu E)^{-1} \left\{ E (A - \mu E)^{-1} \right\}^j	 \, .
	\end{split}
\end{equation*}
Hence, letting $r := \min \{ p , q \}$, for every $v, w \in {\mathbb C}^r$ it follows from the definitions of ${\mathcal V}_m$ and ${\mathcal W}_m$ that
\begin{equation*}
\begin{split}
	&	 \left( \frac{d^j}{ds^j}  \left\{ (A - s E)^{-1} B  \right\} \bigg|_{s = \mu} \right)  P_{R}(\mu) v	\: \in \: {\mathcal V}_\mu	\,	,	\quad	\;\;	\text{and}	\\[.2em]
	&	 \left( \frac{d^j}{ds^j}  \left\{ C (A - s E)^{-1}   \right\} \bigg|_{s = \mu} \right)^\ast  P_{L}(\mu) w	\: \in \: {\mathcal W}_\mu
\end{split}
\end{equation*}
for $j = 0, 1, \dots , q$. Now \cite[Theorem 1]{Gugercin2009} implies that for every $v, w \in {\mathbb C}^r$ we have
\begin{eqnarray}
	&	H^{(j)}(\mu)  \, 	(P_R(\mu) v)
				\;\;	 =	\;\;
		 \big[ H^{{\mathcal W}_{\rm up}, {\mathcal V}_{\rm up}}_\red \big]^{(j)} (\mu) \, 	(P_R(\mu) v)			\label{eq:interpolate1}	\\
	&	(P_L(\mu) w)^\ast	\,  H^{(j)}(\mu)  
				\;\;	 =	\;\;
		(P_L(\mu) w)^\ast  \,  \big[ H^{{\mathcal W}_{\rm up}, {\mathcal V}_{\rm up}}_\red \big]^{(j)} (\mu)	\label{eq:interpolate2}
\end{eqnarray}
for $j	=	0, 1, \dots , q$, and
\begin{equation}\label{eq:interpolate3}
		(P_L(\mu) w)^\ast	\,  H^{(j)}(\mu)  \, 	(P_R(\mu) v)
				\;\;	=	\;\;
		(P_L(\mu) w)^\ast  \,  \big[ H^{{\mathcal W}_{\rm up}, {\mathcal V}_{\rm up}}_\red \big]^{(j)} (\mu) \, 	(P_R(\mu) v)
\end{equation}
for $j	 = 0, 1, \dots , 2q+1$.

We consider the cases $m = p$, $m > p$ and $m < p$ separately below. Throughout the rest of this proof, 
recall that $r := \min \{ p , q \}$. 
% and let $\sigma_1, \dots, \sigma_r$ be the singular values of $H(\mu)$,
% whereas $v_j$ and $w_j$ be a pair of right and left singular vectors corresponding to $\sigma_j$
% satisfying $H(\mu) v_j = \sigma_j w_j$ and $w_j^\ast H(\mu) = \sigma_j v_j^\ast$ for $j = 1,\dots, r$.

\smallskip

\noindent
\textit{Case 1 ($m = p$)} : In this case, $P_R(\mu)$ and $P_L(\mu)$ are the identity matrices. 
Let $j \in \{0, \dots, 2q+1 \}$. For every $v, w \in {\mathbb C}^r$,
we deduce from (\ref{eq:interpolate3}) that
\[
			w^\ast	\,  H^{(j)}(\mu)  \, 	v
				\;\;	=	\;\;
			w^\ast  \,  \big[ H^{{\mathcal W}_{\rm up}, {\mathcal V}_{\rm up}}_\red \big]^{(j)} (\mu) \, 	v
\]
holds for every $v, w \in {\mathbb C}^r$. This in turn implies 
$H^{(j)}(\mu)	=	\big[ H^{{\mathcal W}_{\rm up}, {\mathcal V}_{\rm up}}_\red \big]^{(j)} (\mu)$.

\medskip

\noindent
\textit{Case 2 ($m > p$)} : In this case, $P_R(\mu) = H(\mu)^\ast$ and $P_L(\mu) = I_p$.
For $j  = 0, 1, \dots, q$, denoting with $e_\ell$ the $\ell$th column of $I_p$,
it follows from (\ref{eq:interpolate2}) that
\[
		e_\ell^\ast	\,  H^{(j)}(\mu)  
				\;\;	 =	\;\;
		e_\ell^\ast  \,  \big[ H^{{\mathcal W}_{\rm up}, {\mathcal V}_{\rm up}}_\red \big]^{(j)} (\mu)
\]
for $\ell = 0, 1, \dots, p \,$ so that $\, H^{(j)}(\mu)  =  \big[ H^{{\mathcal W}_{\rm up}, {\mathcal V}_{\rm up}}_\red \big]^{(j)} (\mu)$.

Now let $j \in \{ q+1, \dots , 2q+1 \}$. For every $v, w \in {\mathbb C}^r$, by (\ref{eq:interpolate3}), 
we have
\begin{equation}\label{eq:higher_ders}
		w^\ast P_L(\mu)^\ast H^{(j)}(\mu) P_R(\mu) v  \;\; = \;\; w^\ast P_L(\mu)^\ast \big[ H^{{\mathcal W}_{\rm up}, {\mathcal V}_{\rm up}}_\red \big]^{(j)} (\mu) P_R(\mu) v.
\end{equation}
This shows that $P_L(\mu)^\ast H^{(j)}(\mu) P_R(\mu)	=	P_L(\mu)^\ast \big[ H^{{\mathcal W}_{\rm up}, {\mathcal V}_{\rm up}}_\red \big]^{(j)} (\mu) P_R(\mu)$ as desired.

\medskip

\noindent
\textit{Case 3 ($m < p$)} : As $P_R(\mu) = I_m$ and $P_L(\mu) = H(\mu)$,
for $j  = 0, 1, \dots, q$ the identity in (\ref{eq:interpolate1}) implies
\[
		 H^{(j)}(\mu)  \, e_\ell
				\;\;	 =	\;\;
		 \big[ H^{{\mathcal W}_{\rm up}, {\mathcal V}_{\rm up}}_\red \big]^{(j)} (\mu)  \, e_\ell
\]
for $\ell = 0, 1, \dots, m$, where  $e_\ell$ is the $\ell$th column of $I_m$. Consequently, we deduce
$H^{(j)}(\mu)  =  \big[ H^{{\mathcal W}_{\rm up}, {\mathcal V}_{\rm up}}_\red \big]^{(j)} (\mu)$
for $j  = 0, 1, \dots, q$.

For $j \in \{ q+1, \dots , 2q+1 \}$, once again the equality in (\ref{eq:higher_ders}) holds
for every $v, w \in {\mathbb C}^r$ by (\ref{eq:interpolate3}),
implying $P_L(\mu)^\ast H^{(j)}(\mu) P_R(\mu)	=	P_L(\mu)^\ast \big[ H^{{\mathcal W}_{\rm up}, {\mathcal V}_{\rm up}}_\red \big]^{(j)} (\mu) P_R(\mu)$.
\end{proof}
The result above holds even when $P_R(\mu)$ and $P_L(\mu)$ are replaced by the identity 
matrices of proper sizes, but then, unless $m = p$, the matrices $( \, \{ (A - \mu E)^{-1} E \}^j (A - \mu E)^{-1} B \, )$ and 
$( \, C (A - \mu E)^{-1} \{ E (A - \mu E)^{-1} \}^j \, )^\ast$ do not have equal number of columns. Hence, the role of 
$P_R(\mu)$ and $P_L(\mu)$ is to make sure ${\mathcal V}_m$  and ${\mathcal W}_m$ are defined in terms of 
matrices with equal number of columns and of usually full column rank so that the dimensions of
${\mathcal V}_m$  and ${\mathcal W}_m$ are usually the same.

%The result above holds even when $P_R(\mu)$ and $P_L(\mu)$ are replaced by the identity 
%matrices of proper sizes, but then, unless $m = p$, the dimensions of the subspaces ${\mathcal V}_\mu$
%and ${\mathcal W}_\mu$ may be different.
%The role of $P_R(\mu)$ and $P_L(\mu)$ is to make sure ${\mathcal V}_\mu$  and ${\mathcal W}_\mu$ usually
%have the same dimension.

%\[
%			\left( \sum_{\ell = 1}^r \overline{\beta}_j w_j^\ast	 \right) 	\,  H^{(j)}(\mu) 	  \, 	\left(		H(\mu)^\ast \sum_{\ell = 1}^r  \alpha_j w_j		\right)
%				\;\;	=	\;\;
%			\left(	\sum_{\ell = 1}^r \overline{\beta}_j w_j^\ast  \right)	 \,  \big[ H^{{\mathcal W}_{\rm up}, {\mathcal V}_{\rm up}}_\red \big]^{(j)} (\mu) \, 	
%			\left( H(\mu)^\ast \sum_{\ell = 1}^r  \alpha_j w_j \right)
%\]

Our proposed framework, built on these projection and interpolation ideas, operates as follows. It computes the dominant poles of a 
reduced system at every iteration. As the reduced systems are of
low order, in practice this is achieved by first computing all of the finite poles of the reduced system, for instance by using
the QZ algorithm, and sorting them from the largest to the smallest based on the dominance metric in (\ref{eq:dp_criterion}). 
Then, the framework expands the subspaces so that the transfer function of the reduced system after expansion
Hermite interpolates the transfer function of the full system at the computed dominant poles of the reduced system before expansion. 
For this interpolatory subspace expansion task, we put Lemma \ref{thm:main_interpolation} in use.
The details of the proposed framework are given in Algorithm \ref{alg:SM}, where the dominant poles of the reduced 
system are computed in line \ref{inter_point}, and the subspaces are expanded in lines \ref{sub_expand_st0}-\ref{sub_expand_end}
so as to satisfy Hermite interpolation at the points selected among these computed dominant poles 
of the reduced system based on whether they are yet to converge to actual poles. 
We postpone the discussions of how the initial subspaces are constructed in line \ref{init_subspaces}, the termination condition 
to assert convergence in line \ref{return_spec}, and the criterion to decide whether the estimates $\lambda_\ell^{(j)}, v_\ell^{(j)}$ 
have converged in line \ref{check_convergence} to Section \ref{sec:prac_details}. A subtle issue is that we require the
interpolation parameter $q$ in Algorithm \ref{alg:SM} to satisfy $q \geq 1$ if $m = p$ and $q \geq 2$ otherwise.
These choices of $q$ (by Lemma \ref{thm:main_interpolation}) ensure that the first two derivatives of the full transfer function are 
interpolated by those of the reduced transfer function at the interpolation points, which is exploited by the quadratic convergence 
proof in the next section for Algorithm \ref{alg:SM}.

\begin{algorithm}[tb]
%\label{algor}
 \begin{algorithmic}[1]
 
\REQUIRE{system matrices $A, E \in {\mathbb C}^{n\times n}$, $B \in {\mathbb C}^{n\times m}$, $C \in {\mathbb C}^{p\times n}$, 
	$D \in {\mathbb C}^{p\times m}$ for the descriptor system in (\ref{eq:state_space}), interpolation parameter $q \in {\mathbb Z}$
	such that $q \geq 1$ if $m = p$ and $q \geq 2$ if $m \neq p$, and number of dominant poles sought $\kappa$}
\ENSURE{estimate $\zeta_j \in {\mathbb C}$ for the $j$th dominant pole of (\ref{eq:state_space}) and the corresponding eigenvector 
estimate $z_j$ of $L(s) = A - s E$ for $j = 1, 2, \cdots, \kappa$}

\vskip 1.1ex

\STATE \textcolor{mygreen}{\textbf{$\%$ form the initial subspaces}} \\
Set matrices$\,$\footnotesize$V_0$, $W_0$\normalsize$\,$whose columns form orthonormal bases for initial subspaces.   \label{init_subspaces}  

\vskip 1.1ex

% \textcolor{mygreen}{\textbf{$\%$ main loop}} \\
\FOR{$\ell = 1,\,2,\,\dots$}

 	\vskip 1.1ex
	
	\STATE \textcolor{mygreen}{\textbf{$\%$ update the estimates for the dominant poles}} \\
	 $\lambda^{(j)}_{\ell}, v^{(j)}_\ell \gets j\text{th dominant pole of the system with transfer func. }
	 H^{{\mathcal W}_{\ell -1}, {\mathcal V}_{\ell -1}}_\red(s)$ 
	 and corresponding eigenvector of $L^{W_{\ell-1}, V_{\ell-1}}(s) = W^\ast_{\ell-1} A V_{\ell-1} - s W^\ast_{\ell-1} E V_{\ell-1}$ $\text{for } j = 1,\dots, \kappa$, where
	 ${\mathcal W}_{\ell -1}, {\mathcal V}_{\ell -1}$ are the subspaces spanned by the columns of $W_{\ell - 1}$, $V_{\ell - 1}$.		\label{inter_point}
	 
	 \vskip 1.1ex
	 
	 \STATE  \textcolor{mygreen}{\textbf{$\%$ terminate in the case of convergence}} \\
	  \textbf{Return} $\zeta_j \gets \lambda^{(j)}_{\ell}$,
		$\: z_j \gets V_{\ell-1} v^{(j)}_{\ell}$ for $j = 1,\dots, \kappa$ if convergence occurred.  \label{return_spec}
		
	\vskip 1ex		
		
	 \STATE \textcolor{mygreen}{\textbf{$\%$ lines 5-15: $\,$expand subspaces to interpolate at $\lambda^{(j)}_{\ell}$, $\: j = 1,\dots,\kappa$}} \\
	 $V_\ell \gets V_{\ell-1} 
			\quad \text{and}\quad W_\ell \gets W_{\ell-1}$. \label{sub_expand_st0}
	\vskip .3ex
	 \FOR{$j = 1, \dots, \kappa$} \label{sub_expand_st}
	 \vskip .3ex
	 \IF{$\lambda^{(j)}_{\ell}, v^{(j)}_\ell$ did not converge up to the prescribed tolerance}  \label{check_convergence}
	 \vskip .3ex
	  \STATE $\widehat{V} \gets (A - \lambda_\ell^{(j)} E)^{-1} B P_R(\lambda^{(j)}_\ell)$,
		  		$\; \widetilde{V} \gets  \widehat{V} $, $\; \widehat{W} \gets (A - \lambda_\ell^{(j)} E)^{-\ast} C^\ast P_L(\lambda^{(j)}_\ell)$
				and $\widetilde{W} \gets \widehat{W} $, where $\; P_R(\lambda^{(j)}_\ell), P_L(\lambda^{(j)}_\ell) \;$
				are as in (\ref{eq:defn_PR_PL}).   \label{exp_start}
	\vskip .3ex	
			\FOR{$r = 1, \dots, q$}
	\vskip .3ex
		  \STATE $\widehat{V} \gets (A - \lambda^{(j)}_\ell E)^{-1} E \widehat{V} \;$ and  
		  		$\; \widetilde{V} \gets \begin{bmatrix} \widetilde{V} & \widehat{V} \end{bmatrix}$.	\label{exp_start2}
	\vskip .3ex
		  \STATE	$\widehat{W} \gets (A - \lambda^{(j)}_\ell E)^{-\ast} E^\ast \widehat{W} \;$		
						and
				$\; \widetilde{W} \gets \begin{bmatrix} \widetilde{W} & \widehat{W} \end{bmatrix}$.	\label{exp_start3} 
		        \ENDFOR 			       		        
	\vskip .3ex			        
		\STATE
		$V_\ell \gets \operatorname{orth}\left(\begin{bmatrix} V_{\ell} & \widetilde{V} \end{bmatrix}\right)
			\quad \text{and}\quad W_\ell \gets \operatorname{orth}\left(\begin{bmatrix} W_{\ell} & \widetilde{W} \end{bmatrix}\right).$  \label{orthogonalize}
	\vskip .3ex	
	\ENDIF	
	\vskip .3ex				
	\ENDFOR    \label{sub_expand_end}
	
	\vskip 1ex
	
\ENDFOR
 \end{algorithmic}
\caption{Subspace framework to compute dominant poles of (\ref{eq:state_space})}
\label{alg:SM}
\end{algorithm}

The framework resembles the one that we have introduced for ${\mathcal L}_\infty$-norm computation in \cite{Aliyev2017}. 
Only here the dominant poles of the reduced systems are used as the interpolation points, whereas in \cite{Aliyev2017} the 
interpolation points are chosen on the imaginary axis as the points where the ${\mathcal L}_\infty$ norm of the reduced system 
is attained. A second remark is that rather
than interpolating at all dominant poles of the reduced system, we could interpolate 
at only one of the dominant poles at every iteration, e.g., the most
dominant one among the dominant poles of the reduced system that are yet to converge. We have explored such alternatives
in \cite{AMV2020} in the context of computing a few nonlinear eigenvalues closest to a prescribed target. Our numerical experience
is that interpolating at all eigenvalues of the reduced problem is usually more reliable, 
even though in some cases one-per-subspace-iteration interpolation strategy may be more efficient.  

Another possible variation of the subspace framework outlined in Algorithm \ref{alg:SM} is setting the left-hand projection
subspace equal to the right-hand subspace, which is commonly referred as a \textit{one-sided} subspace framework
in model reduction. In contrast, a subspace framework such as Algorithm \ref{alg:SM} that employs different
subspaces from left and right is referred as a \textit{two-sided} subspace framework. In a one-sided variation of
Algorithm \ref{alg:SM}, the right-hand subspace ${\mathcal V}_\ell$ at iteration $\ell$ is formed as in Algorithm \ref{alg:SM},
however ${\mathcal W}_\ell = {\mathcal V}_\ell$. An analogue of the Hermite interpolation result (i.e., Lemma \ref{thm:main_interpolation})
still holds; full and reduced transfer functions are equal at the interpolation point, but their 
derivatives match only up to the $q$th derivative. In the one-sided setting, provided $q \geq 2$,
the rate-of-convergence analysis in the next section still applies leading to the quick convergence 
result in Theorem \ref{thm:quick_convergence}. The advantages of a one-sided variation are that an orthonormal
basis for only one subspace needs to be kept, and, with $q = 2$, fewer number of back and forward
substitutions per iteration may be required while retaining quick convergence. However, in our experience,
it is numerically less stable than its two-sided counterpart. 
% Consequently, we only focus on the two-sided
% framework outlined in Algorithm \ref{alg:SM} in the subsequent sections.

\section{Rate of Convergence of the Subspace Framework}\label{sec:Rconv}
We consider Algorithm \ref{alg:SM} when only the dominant pole is sought (i.e., with $\kappa = 1$). Furthermore, 
without loss of generality, let us assume $\lambda_1$ in (\ref{eq:rational}) is the dominant pole of system (\ref{eq:state_space}). 
To simplify the notation, we set $\mu_\ell := \lambda^{(1)}_\ell$ for the sequence $\{ \lambda^{(1)}_\ell \}$  
generated by the algorithm, 
% It is
% assumed in the subsequent arguments that $\mu_{j}$ and $\mu_{j+1}$ are sufficiently close to $\lambda_1$.
and investigate how small $| \mu_{j+1} - \lambda_1 |$ as compared to $| \mu_{j} - \lambda_1 |$ assuming
$\mu_{j}$ and $\mu_{j+1}$ are sufficiently close to $\lambda_1$.

Our analysis operates on the function
\begin{equation}\label{eq:defn_f}
	f(s)	\;\;	:=	\;\;	
	\frac{  | \det (sE - A) |^2 }{ \| C \cdot \text{adj} (sE - A) \cdot B \|_F^2}
\end{equation}
and its reduced counterpart at the end of the $j$th subspace iteration, that is
\begin{equation}\label{eq:defn_fr}
	f_j(s)	\;\;	:=	\;\;	
	\frac{ | \det (sW_j^\ast E V_j - W_j^\ast A V_j) |^2 }{ \| C V_j \cdot \text{adj} (s W_j^\ast E V_j - W_j^\ast A V_j) \cdot  W_j^\ast B \|_F^2},
\end{equation}
where $\text{adj}(\cdot)$ denotes the adjugate of its matrix argument. Clearly, using the notation $\Lambda(F,G)$
for the set of finite eigenvalues of the pencil $L(s) = F - s G$, we have
\begin{equation*}
	\begin{split}
	&	f(s)
			=	
	\frac{1}{\| H(s) \|_F^2} \;\;	 \forall s \not\in \Lambda(A,E),
	\quad	
	f_j(s)
			=
	\frac{1}{\| H^{{\mathcal W}_j , {\mathcal V}_j}_\red(s) \|_F^2} \;\;	 \forall s \not\in \Lambda(W_j^\ast A V_j , W_j^\ast E V_j).
	\end{split}
\end{equation*}

Moreover, $f(s)$ and $f_j(s)$ are well-defined under mild assumptions even at the finite eigenvalues of the associated pencils.
For instance, as we show next, unless the left eigenspace associated with $\lambda_1$ is orthogonal to $\text{Ran}(B)$ (i.e., 
unless $\lambda_1$ is uncontrollable), or the right eigenspace associated with $\lambda_1$ is orthogonal to $\text{Ran}(C^\ast)$ 
(i.e., unless $\lambda_1$ is unobservable), $f(s)$ is well-defined at $\lambda_1$. To see this, let us consider 
\begin{equation}\label{eq:adj_function}
	\text{adj}(s E - A)	\;\;	=	\;\;		
			\text{det}(s E - A) (s E  -  A)^{-1}	
\end{equation}
near $\lambda_1$. Observe that $\text{adj}(s E - A)$ is continuous everywhere, in particular at $s = \lambda_1$. Hence, by taking the limits
of both sides in (\ref{eq:adj_function}) as $s \rightarrow \lambda_1$ and recalling the Kronecker canonical form (\ref{eq:Kronecker}), 
it follows that
\begin{equation*}
	C \: \text{adj}(\lambda_1 E - A)  \: B		\;\;		=	\;\;	
			 	(-1)^{n-\widetilde{n}} \cdot \text{det}(W^{-\ast} V^{-1}) \cdot  CV \: 
						\left[
							\begin{array}{cc}
									\prod_{j = 2}^{\widetilde{n}} \lambda_1 - \lambda_j	&		0		\\
											0	&	0		\\
							\end{array}
						\right] \: W^\ast B.
\end{equation*}
From here we deduce that, unless $w_1 \bot \text{Ran}(B)$ or $v_1 \bot \text{Ran}(C^\ast)$, the denominator in (\ref{eq:defn_f}) is nonzero
at $s = \lambda_1$. Similarly, if $\mu_{j+1}$ is a controllable and observable pole of the reduced
transfer function $H^{{\mathcal W}_j , {\mathcal V}_j}_\red$, then $f_j(s)$ is well-defined at $\mu_{j+1}$.  
To summarize, simple conditions, such as the minimality of the original system and the reduced system at the end
of the $j$th subspace iteration (as the minimality implies both the controllability and the observability of all poles), 
guarantee the well-posedness of $f(s)$ and $f_j(s)$ everywhere. We make the 
following assumptions that guarantee the well-posedness of $f(s)$ and $f_j(s)$ at $\lambda_1$ and $\mu_{j+1}$
throughout the rest of this section.
\begin{assumption}\label{assum:well-posed}
The following conditions are satisfied:
\begin{enumerate}
	\item	$C \cdot \mathrm{adj} (\lambda_1 E - A) \cdot B \neq 0$.
	\item For prescribed $\beta > 0$, we have $\| C V_j \cdot \mathrm{adj} (\mu_{j+1} W_j^\ast E V_j - W_j^\ast A V_j) \cdot  W_j^\ast B \|_2 \geq \beta$.
\end{enumerate}
\end{assumption} 

% It is evident that the numerators and denominators in (\ref{eq:defn_f}) and (\ref{eq:defn_fr}) defining $f$ and $f_j$ are
% polynomials in $\Re s$ and $\Im s$, the real and imaginary parts of $s$. Hence, Assumption \ref{assum:well-posed} implies that $f$ and $f_j$
% are three-times differentiable with respect to $\Re s$, $\Im s$ with all of their first three derivatives bounded in balls centered around
% $\lambda_1$ and $\mu_{j+1}$, respectively. Indeed, recalling that $\mu_j$ and $\mu_{j+1}$ are assumed to be sufficiently close to $\lambda_1$,
% it can be shown that these functions are three-times differentiable with bounded first third derivatives in a common ball $B(\lambda_1, \delta)$.
% Moreover, the radius $\delta$ of the ball as well as the bounds on the derivatives of $f_j$ are uniform over all 
% $V_j, W_j$ as long as the second condition in Assumption \ref{assum:well-posed} is met; the arguments similar to those in \cite[Section 3]{Aliyev2017}
% can be used to establish the uniformity of the constants.

It is evident that the numerators and denominators in (\ref{eq:defn_f}) and (\ref{eq:defn_fr}) defining $f$ and $f_j$ are
polynomials in $\Re s$ and $\Im s$, the real and imaginary parts of $s$. 
It can be shown by also employing Assumption \ref{assum:well-posed} that these functions are three-times differentiable with 
respect to $\Re s$, $\Im s$ with all of their first  three derivatives bounded in a ball $B(\lambda_1, \delta) := \{ z \in {\mathbb C} \: | \: | z - \lambda_1 | \leq \delta \}$ 
that contain $\mu_j$, $\mu_{j+1}$ (recall that $\mu_j$, $\mu_{j+1}$ are assumed to be sufficiently close to $\lambda_1$). 
Indeed, it can be shown that the radius $\delta$ and bounds on the derivatives of $f_j$ are uniform over all 
$V_j, W_j$ as long as the second condition in Assumption \ref{assum:well-posed} is met. The arguments similar to those in \cite[Section 3]{Aliyev2017}
can be used to show the existence of the ball, and the uniformity of the bounds.

The interpolatory properties between $f(s)$ and $f_j(s)$ and their first two derivatives at $\mu_j$ follow from 
Lemma \ref{thm:main_interpolation}. In the subsequent arguments, we use the notations
\begin{equation*}
	\begin{split}
	&
		\hskip 10ex
	f'(\widehat{s})
		\;	=	\;
	\left[
		\begin{array}{cc}
			\frac{\partial f}{\partial \Re s} (\widehat{s})
				&
			\frac{\partial f}{\partial \Im s} (\widehat{s})
		\end{array}
	\right]^T,	\quad
	f'_j(\widehat{s})
		\;	=	\;
	\left[
		\begin{array}{cc}
			\frac{\partial f_j}{\partial \Re s} (\widehat{s})
				&
			\frac{\partial f_j}{\partial \Im s} (\widehat{s})
		\end{array}
	\right]^T			\\[.5em]
	&
	\nabla^2 f(\widehat{s})
		\;	=	\;
	\left[
		\begin{array}{cc}
			\frac{\partial^2 f}{\partial \Re s \: \partial \Re s} (\widehat{s})
				&
			\frac{\partial^2 f}{\partial \Re s \: \partial \Im s} (\widehat{s})		\\[.3em]
			\frac{\partial^2 f}{\partial \Im s  \: \partial \Re s} (\widehat{s})
				&
			\frac{\partial^2 f}{\partial \Im s  \: \partial \Im s} (\widehat{s})
		\end{array}
	\right],	\quad
	\nabla^2 f_j(\widehat{s})
		\;	=	\;
	\left[
		\begin{array}{cc}
			\frac{\partial^2 f_j}{\partial \Re s \: \partial \Re s} (\widehat{s})
				&
			\frac{\partial^2 f_j}{\partial \Re s \: \partial \Im s} (\widehat{s})		\\[.3em]
			\frac{\partial^2 f_j}{\partial \Im s \: \partial \Re s} (\widehat{s})
				&
			\frac{\partial^2 f_j}{\partial \Im s \: \partial \Im s} (\widehat{s})
		\end{array}
	\right]
	\end{split}
\end{equation*}
at a given $\widehat{s} \in {\mathbb C}$, where $f$ and $f_j$ are twice differentiable with respect to $\Re s$, $\Im s$.
Moreover, ${\mathcal R} : {\mathbb C} \rightarrow {\mathbb R}^2$ is the linear map 
$
   {\mathcal R}(z)
   	\:	:=	\:
	\left[
		\begin{array}{cc}
			\Re z
				&
			\Im z
		\end{array}
	\right]^T
$.
\begin{theorem}\label{thm:intermed_inter}
Suppose $\mu_j$ is not an eigenvalue of $L(s) = A - sE$ and not a pole of $H^{{\mathcal W}_j, {\mathcal V}_j}_\red$,
and Assumption \ref{assum:well-posed} holds. Then
\begin{equation}\label{eq:interpol_prop}
	f'(\mu_j) \; = \; f_j'(\mu_j)	\quad\quad	\text{and}	\quad\quad	\nabla^2 f(\mu_j)	\;	=	\;	\nabla^2 f_j(\mu_j).
\end{equation}
\end{theorem}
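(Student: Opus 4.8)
The key observation is that both $f$ and $f_j$ are, near $\mu_j$, reciprocals of squared Frobenius norms of transfer functions, namely $f(s) = 1/\|H(s)\|_F^2$ and $f_j(s) = 1/\|H^{{\mathcal W}_j,{\mathcal V}_j}_\red(s)\|_F^2$, both of which are well-defined and smooth in a neighborhood of $\mu_j$ under Assumption \ref{assum:well-posed} (the denominators are nonzero there). So I would first reduce the claim about $f,f_j$ to a claim about $g(s) := \|H(s)\|_F^2$ and $g_j(s) := \|H^{{\mathcal W}_j,{\mathcal V}_j}_\red(s)\|_F^2$: if $g$ and $g_j$ agree at $\mu_j$ together with their first two derivatives (in $\Re s, \Im s$), then so do their reciprocals $f = 1/g$, $f_j = 1/g_j$, by the chain rule — the value, gradient, and Hessian of $1/g$ at a point depend only on the value, gradient, and Hessian of $g$ at that point, provided $g(\mu_j)\neq 0$, which is exactly Assumption \ref{assum:well-posed}.

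**Reducing to derivatives of $H$.**
Next I would express $g(s) = \|H(s)\|_F^2 = \sum_{i,k} |H_{ik}(s)|^2$ and note that, writing $s = \sigma + \ri\tau$, the partial derivatives of $|H_{ik}(s)|^2$ with respect to $\sigma,\tau$ up to second order are polynomial expressions in $H_{ik}(s)$ and the holomorphic derivatives $H_{ik}'(s)$, $H_{ik}''(s)$ evaluated at $s$ (using $\partial_\sigma H_{ik} = H_{ik}'$, $\partial_\tau H_{ik} = \ri H_{ik}'$, etc., since $H_{ik}$ is holomorphic near $\mu_j$). Concretely, each such partial derivative of $g$ at $\mu_j$ is a fixed universal polynomial in the entries of $H(\mu_j)$, $H'(\mu_j)$, $H''(\mu_j)$ and their conjugates; the same universal polynomial, applied to $H^{{\mathcal W}_j,{\mathcal V}_j}_\red(\mu_j)$, $[H^{{\mathcal W}_j,{\mathcal V}_j}_\red]'(\mu_j)$, $[H^{{\mathcal W}_j,{\mathcal V}_j}_\red]''(\mu_j)$, gives the corresponding partial derivative of $g_j$ at $\mu_j$. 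Hence it suffices to show
\[
	H^{(k)}(\mu_j) \; = \; \big[ H^{{\mathcal W}_j, {\mathcal V}_j}_\red \big]^{(k)}(\mu_j), \qquad k = 0, 1, 2.
\]

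**Invoking Lemma \ref{thm:main_interpolation}.**
This last matching of $H$ and its first two holomorphic derivatives is precisely what Lemma \ref{thm:main_interpolation} delivers: the subspaces ${\mathcal V}_j, {\mathcal W}_j$ constructed in lines \ref{sub_expand_st0}--\ref{sub_expand_end} of Algorithm \ref{alg:SM} are (contained in expansions matching) exactly ${\mathcal V}_{\rm up}, {\mathcal W}_{\rm up}$ of that lemma with $\mu = \mu_j$, and the hypothesis $q \geq 1$ when $m=p$ (resp. $q \geq 2$ otherwise) forces at least the first two derivatives of the full and reduced transfer functions to coincide at $\mu_j$ — part (2) of the lemma gives $H^{(k)}(\mu_j) = [H^{{\mathcal W}_j,{\mathcal V}_j}_\red]^{(k)}(\mu_j)$ for $k=1,\dots,q$, and part (1) gives the $k=0$ case. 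The hypotheses of the lemma that must be checked are that $\mu_j$ is not an eigenvalue of $L$ and not a pole of the reduced transfer function, both of which are assumed in the statement of the theorem; and, implicitly, that enlarging ${\mathcal V}_{\rm up},{\mathcal W}_{\rm up}$ by whatever extra directions already sit in ${\mathcal V}_j,{\mathcal W}_j$ (from previous iterations or earlier dominant poles) does not destroy the interpolation — but Lemma \ref{thm:main_interpolation} is stated precisely for such supersets ${\mathcal W}_{\rm up} = {\mathcal W}\oplus{\mathcal W}_\mu$, ${\mathcal V}_{\rm up} = {\mathcal V}\oplus{\mathcal V}_\mu$, so this is automatic.

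**The main obstacle.**
The genuine work is the bookkeeping in the reduction step: verifying carefully that every first- and second-order partial derivative of a squared Frobenius norm of a holomorphic matrix function is a universal function of the value and first two holomorphic derivatives of that function (so that equality of the latter transfers to equality of the former), and likewise that the value/gradient/Hessian of $1/g$ are universal functions of those of $g$ at a point where $g$ does not vanish. Neither is deep — both are applications of the chain rule and the Cauchy--Riemann-type identities $\partial_\sigma = \tfrac{d}{ds}$, $\partial_\tau = \ri\tfrac{d}{ds}$ for holomorphic maps — but the bound on the number of derivatives ($q\ge 1$ vs.\ $q\ge 2$) has to be threaded through exactly, since matching only the first derivative of $H$ would suffice for $f'(\mu_j)=f_j'(\mu_j)$ but \emph{not} for the Hessian identity, which needs the second derivative of $H$ as well. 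I expect essentially no obstacle beyond this careful accounting.
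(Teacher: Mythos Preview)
Your proposal is correct and follows essentially the same argument as the paper: invoke Lemma~\ref{thm:main_interpolation} to obtain $H^{(k)}(\mu_j) = \big[H^{{\mathcal W}_j,{\mathcal V}_j}_\red\big]^{(k)}(\mu_j)$ for $k=0,1,2$, then differentiate $f(s) = 1/\|H(s)\|_F^2 = 1/\text{Trace}(H(s)^\ast H(s))$ and $f_j(s) = 1/\|H^{{\mathcal W}_j,{\mathcal V}_j}_\red(s)\|_F^2$ and use those equalities. One small bookkeeping point: when $m=p$ and $q=1$, the second-derivative match $H''(\mu_j) = \big[H^{{\mathcal W}_j,{\mathcal V}_j}_\red\big]''(\mu_j)$ comes from part~(3) of the lemma (where $P_L,P_R$ are identities), not part~(2).
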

\begin{proof}
We deduce from Lemma \ref{thm:main_interpolation} that 
\begin{equation}\label{eq:transfer_inter}
	H(\mu_j) = H^{{\mathcal W}_j, {\mathcal V}_j}_\red(\mu_j),	\;\;	H'(\mu_j) = [H^{{\mathcal W}_j, {\mathcal V}_j}_\red]'(\mu_j),	\;\;
	H''(\mu_j) = [H^{{\mathcal W}_j, {\mathcal V}_j}_\red]''(\mu_j).
\end{equation}
The result is obtained by differentiating
\[
	f(s)	\;	=	\;	\frac{1}{ \text{Trace}( H(s)^\ast H(s) ) },	\quad
	f_j(s) 	\;	=	\;  \frac{1}{\text{Trace}( H^{{\mathcal W}_j, {\mathcal V}_j}_\red(s)^\ast H^{{\mathcal W}_j, {\mathcal V}_j}_\red(s)) }
\]
and employing the equalities in (\ref{eq:transfer_inter}).
\end{proof}

We have $f(s) \geq 0$ and $f_j(s) \geq 0$ for all $s \in B(\lambda_1, \delta)$ by the defining equations in (\ref{eq:defn_f}) 
and (\ref{eq:defn_fr}), as well as $f(\lambda_1) = f_j(\mu_{j+1}) = 0$. Consequently, $\lambda_1$, $\mu_{j+1}$ are
global minimizers of $f$, $f_j$ implying
$
			f'(\lambda_1) 	\:	=	\:	 f'_j(\mu_{j+1}) \:	=	\: 0.
$
It follows that
\begin{equation*}
	\begin{split}
	0	\;	=	\;	f'(\lambda_1)	\;	&	=	\;	
		f'(\mu_j)	
				+	 \int_0^1	\nabla^2 f(\mu_j + t (\lambda_1 - \mu_j))  {\mathcal R}(\lambda_1 - \mu_j) \: {\rm d} t		\\[.5em]
			\;	&	=	\;
		f'(\mu_j)		+	\nabla^2 f(\mu_j) {\mathcal R}(\lambda_1 - \mu_j) 	\\
		&	\hskip 13ex
				 + \int_0^1	\left\{ \nabla^2 f(\mu_j + t (\lambda_1 - \mu_j))  -  \nabla^2 f(\mu_j)  \right\} {\mathcal R}(\lambda_1 - \mu_j) \: {\rm d} t	\\[.5em]
		\;	&	=	\;		
		f_j'(\mu_j)		+	\nabla^2 f_j(\mu_j) {\mathcal R}(\lambda_1 - \mu_j) 		\\
		&	\hskip 13ex
				 + \int_0^1	\left\{ \nabla^2 f(\mu_j + t (\lambda_1 - \mu_j))  -  \nabla^2 f(\mu_j)  \right\} {\mathcal R}(\lambda_1 - \mu_j) \: {\rm d} t,
	\end{split}
\end{equation*}
where the last equality is due to (\ref{eq:interpol_prop}).
An application of Taylor's theorem with second order remainder to $f_j'(s)$ at $\mu_{j+1}$ about $\mu_j$ 
together with $f'_j(\mu_{j+1}) = 0$ yield
\begin{equation*}
	\begin{split}
		f_j'(\mu_j)		+	\nabla^2 f_j(\mu_j) {\mathcal R}(\lambda_1 - \mu_j)
		\;\;	&  =	\;\;
		\nabla^2 f_j(\mu_j) {\mathcal R}(\lambda_1 - \mu_{j+1}) 	+	{\mathcal O}(  \left| \mu_{j+1} - \mu_j \right|^2 )	\\
			&	=	\;\;\:
		\nabla^2 f(\mu_j) {\mathcal R}(\lambda_1 - \mu_{j+1}) 	+	{\mathcal O}(  \left| \mu_{j+1} - \mu_j \right|^2 ).
	\end{split}
\end{equation*}	
Substituting the right-hand side of the last equality above in the previous equation, we deduce
\begin{equation}\label{eq:intermed}
	\begin{split}
	\small	\nabla^2 f(\mu_j) {\mathcal R}(\mu_{j+1} - \lambda_1)		&	\small =			
					\int_0^1	\left\{ \nabla^2 f(\mu_j + t (\lambda_1 - \mu_j))  -  \nabla^2 f(\mu_j)  \right\} {\mathcal R}(\lambda_1 - \mu_j)  {\rm d} t  \\
					& 	\small    \hskip 36ex		\: + \:   {\mathcal O}(  \left| \mu_{j+1} - \mu_j \right|^2 ).
	\end{split}
\end{equation}

Let us also suppose $\nabla^2 f(\lambda_1)$ is invertible. Then, by the continuity of $\nabla^2 f$ in $B(\lambda_1, \delta)$
and letting $\sigma_{\min} (\nabla^2 f(s))$ denote the smallest singular value of $\nabla^2 f(s)$, 
there exists a constant $\eta > 0$ such that 
\begin{equation}\label{eq:intermed2}
		\sigma_{\min} (\nabla^2 f(s)) \geq \eta  \quad\quad   \forall s \in B(\lambda_1, \delta), 
\end{equation}
by choosing $\delta$ even smaller if necessary. Furthermore, boundedness of the third derivatives of $f$ implies the Lipschitz continuity of its second
derivatives, in particular the existence of a constant $\gamma > 0$ such that
\begin{equation}\label{eq:intermed3}
		\| \nabla^2 f(\mu_j + t (\lambda_1 - \mu_j))  -  \nabla^2 f(\mu_j) \|_2 \leq \gamma t | \lambda_1 - \mu_j |  	\quad\quad	\forall t \in [0,1].
\end{equation}
By taking the 2-norms of both sides in (\ref{eq:intermed}), using the triangle inequality, as well as the inequalities
(\ref{eq:intermed2}), (\ref{eq:intermed3}), we obtain
\[
		\eta \left| \lambda_1 - \mu_{j+1} \right|  \;\; \leq \;\;  (\gamma/2)  \left| \lambda_1 - \mu_j \right|^2 \: + \: {\mathcal O}(  \left| \mu_{j+1} - \mu_j \right|^2 ).
\]
Finally, noting that ${\mathcal O}(  \left| \mu_{j+1} - \mu_j \right|^2 )$ terms are bounded from above by 
$c \left| \mu_{j+1} - \mu_j \right|^2 \leq 2 c \{  \left| \lambda_1 - \mu_j \right|^2  +  \left| \lambda_1 - \mu_{j+1} \right|^2 \}
\leq  2 c \{  \left| \lambda_1 - \mu_j \right|^2  +  \delta \left| \lambda_1 - \mu_{j+1} \right| \}$ for some constant $c$, we have
\[
		\hskip -4ex
		 (\eta - 2c\delta)  \left| \lambda_1 - \mu_{j+1} \right|  \;\; \leq \;\;  (\gamma/2 + 2c)  \left| \lambda_1 - \mu_j \right|^2.
\]
Our findings are summarized in the next result.
\begin{theorem}[At Least Quadratic Convergence of the Subspace Framework]\label{thm:quick_convergence}
Assuming that the iterates $\mu_j, \mu_{j+1}$ of Algorithm \ref{alg:SM} are sufficiently close to $\lambda_1$, the Hessian $\nabla^2 f(\lambda_1)$ is invertible,
and the assumptions of Theorem \ref{thm:intermed_inter} hold, we have
\[
			\left| \lambda_1 - \mu_{j+1} \right|  \;\; \leq \;\;  C  \left| \lambda_1 - \mu_j \right|^2
\]
for some constant $C > 0$.
\end{theorem}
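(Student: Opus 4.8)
The plan is to assemble into a single clean argument the inequality chain that the preceding discussion has essentially already produced. The starting point is the identity (\ref{eq:intermed}). To obtain it, I would first record that $\lambda_1$ and $\mu_{j+1}$ are global minimizers of the nonnegative functions $f$ and $f_j$ on $B(\lambda_1,\delta)$ (since $f(\lambda_1) = f_j(\mu_{j+1}) = 0$), so $f'(\lambda_1) = f_j'(\mu_{j+1}) = 0$. Expanding $f'$ at $\lambda_1$ about $\mu_j$ with integral remainder, replacing $f'(\mu_j), \nabla^2 f(\mu_j)$ by $f_j'(\mu_j), \nabla^2 f_j(\mu_j)$ via the second-order interpolation property of Theorem~\ref{thm:intermed_inter}, and then applying Taylor's theorem with second-order remainder to $f_j'$ at $\mu_{j+1}$ about $\mu_j$ (using $f_j'(\mu_{j+1}) = 0$ and, again, $\nabla^2 f_j(\mu_j) = \nabla^2 f(\mu_j)$), one arrives at (\ref{eq:intermed}).

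Next I would bring in the two regularity ingredients. Invertibility of $\nabla^2 f(\lambda_1)$ plus continuity of $\nabla^2 f$ on $B(\lambda_1,\delta)$ gives, after shrinking $\delta$ if necessary, a constant $\eta > 0$ with $\sigma_{\min}(\nabla^2 f(s)) \geq \eta$ throughout $B(\lambda_1,\delta)$, as in (\ref{eq:intermed2}); hence the left-hand side of (\ref{eq:intermed}) has $2$-norm at least $\eta|\mu_{j+1} - \lambda_1|$. Boundedness of the third derivatives of $f$ on $B(\lambda_1,\delta)$ yields Lipschitz continuity of $\nabla^2 f$, i.e. a constant $\gamma > 0$ as in (\ref{eq:intermed3}), so the integrand in (\ref{eq:intermed}) is bounded in norm by $\gamma t\,|\lambda_1 - \mu_j|^2$ and the integral term by $(\gamma/2)|\lambda_1 - \mu_j|^2$. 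Taking $2$-norms in (\ref{eq:intermed}) and using the triangle inequality then gives $\eta|\lambda_1 - \mu_{j+1}| \leq (\gamma/2)|\lambda_1 - \mu_j|^2 + \mathcal{O}(|\mu_{j+1} - \mu_j|^2)$.

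The final step is to absorb the remainder on the left. Writing $\mathcal{O}(|\mu_{j+1}-\mu_j|^2) \leq c\,|\mu_{j+1}-\mu_j|^2 \leq 2c(|\lambda_1-\mu_j|^2 + |\lambda_1-\mu_{j+1}|^2) \leq 2c(|\lambda_1-\mu_j|^2 + \delta|\lambda_1-\mu_{j+1}|)$, where the last bound uses $\mu_{j+1} \in B(\lambda_1,\delta)$, and rearranging, one gets $(\eta - 2c\delta)|\lambda_1-\mu_{j+1}| \leq (\gamma/2 + 2c)|\lambda_1 - \mu_j|^2$. Choosing $\delta$ small enough that $\eta - 2c\delta > 0$ (permissible since $\eta$ does not depend on $\delta$ once $\delta$ is small), the claimed bound holds with $C := (\gamma/2 + 2c)/(\eta - 2c\delta)$.

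The one genuinely delicate point — and the only real obstacle — is the \emph{uniformity} of the constants $c$ (hidden in the $\mathcal{O}$-term), $\gamma$, and the radius $\delta$ with respect to the subspace iterates $V_j, W_j$: if these depended on $j$, the constant $C$ would too and the quadratic recursion would not close. This is exactly where the second part of Assumption~\ref{assum:well-posed} enters, guaranteeing that $B(\lambda_1,\delta)$ and the bounds on the first three derivatives of $f_j$ can be taken uniform over all admissible $V_j, W_j$; the arguments of \cite[Section~3]{Aliyev2017} supply the details. I would also note in passing that the hypotheses of Theorem~\ref{thm:intermed_inter} ($\mu_j$ not an eigenvalue of $L$ and not a pole of the reduced transfer function) hold for $\mu_j$ sufficiently close to, but distinct from, the simple eigenvalue $\lambda_1$, which is the relevant situation along the iteration.
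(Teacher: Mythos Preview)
Your proposal is correct and follows essentially the same argument as the paper: derive (\ref{eq:intermed}) from the first-order conditions and the interpolation identities of Theorem~\ref{thm:intermed_inter}, take $2$-norms using (\ref{eq:intermed2}) and (\ref{eq:intermed3}), and absorb the $\mathcal{O}(|\mu_{j+1}-\mu_j|^2)$ term exactly via the same $2c(|\lambda_1-\mu_j|^2 + \delta|\lambda_1-\mu_{j+1}|)$ bound. Your additional remarks on the uniformity of the constants over $V_j, W_j$ and on the hypotheses of Theorem~\ref{thm:intermed_inter} holding near $\lambda_1$ are consistent with the paper's surrounding discussion.
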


%\begin{equation*}
%	\begin{split}
%	\text{adj}(s E - A)	\;\;	&	=	\;\;		
%			\text{det}(sW^{-\ast} \Lambda_E V^{-1} - W^{-\ast} \Lambda_A V^{-1}) (s W^{-\ast} \Lambda_E V^{-1} - W^{-\ast} \Lambda_ A V^{-1})^{-1}		\\
%	\;\;	&	=	\;\l	\text{det}(W^{-\ast} V^{-1}) \cdot  V \: 
%						\left[
%							\begin{array}{cccccc}
%									\prod_{j = 2}^{\widetilde{n}} s - \lambda_j	&			&		&		&		\\
%											&	\ddots		&			&		&		&		\\
%											&				&	\prod_{j=1}^{\widetilde{n}-1} s - \lambda_j		&		&		&	\\
%									&		&		&	-( \prod_{j=1}^n s - \lambda_j )	&	&		\\
%									&		&		&			&	\ddots	&		\\
%									&		&		&			&			&	-( \prod_{j=1}^n s - \lambda_j	)
%							\end{array}
%						\right] \: W^\ast
%	\end{split}
%\end{equation*}

\section{Practical Details}\label{sec:prac_details}
Here, we spell out some details that one has to take into consideration
in a practical implementation of Algorithm \ref{alg:SM}.

\subsection{Forming Initial Subspaces}\label{sec:init_subspaces}
Initial subspaces in line \ref{init_subspaces} of Algorithm \ref{alg:SM} are constructed so as to attain
Hermite interpolation between the full and reduced system at prescribed points $\rho_1, \dots, \rho_\varphi \in {\mathbb C}$.
It is desirable that $\rho_1, \dots, \rho_\varphi$ are not very far away from the dominant poles of $H$.

One possibility for the selection of these initial interpolation points is to form a rational approximation for
the full transfer function $H$, then use the dominant poles of the rational approximation. The AAA algorithm \cite{Nakatsukasa2018} 
is widely employed at the moment for such rational approximation problems. We have attempted to approximate
the entries of $H$ using the AAA algorithm, but, on benchmark examples, this approach usually did not yield points close to the dominant poles.

Instead, we apply the algorithm in \cite{Aliyev2017} for ${\mathcal L}_\infty$-norm computation crudely, which gives rise to
a reduced order model that approximates $H$ reasonably well on some parts of the imaginary axis where it exhibits large peaks.
The initial interpolation points $\rho_1, \dots, \rho_\varphi$ are then set equal to the dominant poles of the retrieved
reduced order model. 
% The algorithm for ${\mathcal H}_\infty$-norm computation

\subsection{Termination Condition}
A natural choice for a termination condition in line \ref{return_spec} of Algorithm \ref{alg:SM} is based on the $\infty$-norms 
of the residuals
\begin{equation}\label{eq:res}
	\mathsf{R}\mathsf{s}( \lambda^{(j)}_\ell , v^{(j)}_\ell )   \:   :=   \:   \|  (A - \lambda^{(j)}_\ell E) \cdot V_{\ell - 1} v^{(j)}_\ell \|_\infty
\end{equation}
for $j = 1,\dots, \kappa$. If all of $\mathsf{R}\mathsf{s}( \lambda^{(j)}_\ell , v^{(j)}_\ell )$ are smaller than a prescribed tolerance \texttt{tol}, then we terminate
with $\zeta_j = \lambda^{(j)}_\ell$, $z_j =  V_{\ell - 1} v^{(j)}_\ell$ for $j = 1,\dots, \kappa$.

% An alternative is to use the relative residuals $\mathsf{R}\mathsf{s}( \lambda^{(j)}_\ell , v^{(j)}_\ell ) / ( \| A \|_\infty + |  \lambda^{(j)}_\ell | \| E \|_\infty )$

\subsection{Deciding the Convergence of a Dominant Pole Estimate}
The decision whether the estimates $\lambda^{(j)}_\ell , v^{(j)}_\ell$ have converged or not in line \ref{check_convergence}
of Algorithm \ref{alg:SM} is also given based on the residual $\mathsf{R}\mathsf{s}( \lambda^{(j)}_\ell , v^{(j)}_\ell )$ in (\ref{eq:res}).
Specifically, if $\mathsf{R}\mathsf{s}( \lambda^{(j)}_\ell , v^{(j)}_\ell ) < \texttt{tol}$, then $\lambda^{(j)}_\ell , v^{(j)}_\ell$
are deemed as already converged, and, as a result, the subspace expansion to Hermite interpolate at $\lambda^{(j)}_\ell$ is skipped.
The tolerance \texttt{tol} used here is \\[.2em]
the same as the one used for termination above.

\subsection{Solutions of Linear Systems}
The main computational burden of Algorithm \ref{alg:SM} is due to the solutions of linear
systems. Computation of the dominant poles for the reduced problems in line \ref{inter_point} by the QZ 
algorithm and orthonormalization of the bases for the projection subspaces in line \ref{orthogonalize} are quite 
negligible compared to the solutions of linear systems.

\smallskip

At each subspace iteration, several linear systems need to be solved in lines \ref{sub_expand_st} - \ref{sub_expand_end} \\[.15em]  % \\[.15em]
of Algorithm \ref{alg:SM}. In this part of the algorithm and inside the for loop over $j$, the coefficient matrix for all of the linear systems  
is either $(A - \lambda^{(j)}_\ell E)$ or $(A - \lambda^{(j)}_\ell E)^\ast$. We compute an LU factorization of $(A - \lambda^{(j)}_\ell E)$ % \\[.1em]
only once for solving all of these linear \\[.15em]
systems. Then the resulting triangular linear systems are solved. % by back and forward substitutions.
% The runtime is mainly determined by these LU factorization computations.

\subsection{Orthonormalization of the Bases for the Subspaces}\label{sec:orthonormalize}
As $\lambda^{(j)}_\ell$ tends to converge with respect to $\ell$, the new directions $\widetilde{V}, \widetilde{W}$ to be included in the subspaces 
${\mathcal V}_{\ell-1}, {\mathcal W}_{\ell-1}$ nearly lie in these subspaces. Hence, without orthonormalization, the matrices
$[ V_{\ell - 1}  \; \widetilde{V} ]$ and  $[ W_{\ell - 1}  \; \widetilde{W} ]$ would be ill-conditioned, which in turn may result
in reduced order systems that are not computed accurately due to the rounding errors. Orthonormalization resolves
this numerical difficulty; in particular, the matrices $V_\ell, W_\ell$ with orthonormal columns at the end of the $\ell$th
iteration are well-conditioned.

In practice, in line \ref{orthogonalize} of Algorithm \ref{alg:SM}, 
when orthogonalizing the columns of $\widetilde{V}, \widetilde{W}$
with respect to the spaces spanned by the columns of $V_{\ell}, W_{\ell}$, we first apply
\begin{equation}\label{eq:orthogonalize}
	\widetilde{V}	\;	\longleftarrow  \; \widetilde{V}	\:  -  \:	V_{\ell} \, (V_{\ell}^\ast \widetilde{V} )
		\quad	\text{and}		\quad
	\widetilde{W}	\;	\longleftarrow  \; \widetilde{W}	 \: -  \:	W_{\ell} \, (W_{\ell}^\ast \widetilde{W})
\end{equation}
several times. Then the columns of $\widetilde{V}, \widetilde{W}$ are orthonormalized, and $V_{\ell}, W_{\ell}$ are 
augmented with these orthonormalized matrices.
% obtained by augmenting $V_{\ell-1}, W_{\ell-1}$
% with the matrices whose columns form orthonormal bases for $\widetilde{V}$, $\widetilde{W}$.
The reorthogonalization strategy (i.e., application of (\ref{eq:orthogonalize}) several times)
improves the accuracy in the presence of rounding errors of the columns of $V_\ell, W_\ell$  
as orthonormal bases for the subspaces ${\mathcal V}_\ell, {\mathcal W}_\ell$.

\subsection{Complexity}{\label{sec:complexity}
Putting the formation of initial subspaces aside, there are three main computational tasks that
an actual implementation of Algorithm~\ref{alg:SM} must carry out at every subspace iteration
(i.e., per an iteration of the outer for loop).
\begin{enumerate}
	\item Computation of dominant poles of the reduced system in line \ref{inter_point}.
	\item Solutions of linear systems in lines \ref{exp_start}, \ref{exp_start2}, \ref{exp_start3}.
	\item Orthonormalizing the bases for projection subspaces in line \ref{orthogonalize}.
\end{enumerate}

As argued above the computational costs for items 1. and 3. are quite negligible. If the subspace
dimensions are $d$, which is typically much smaller than $n$, then 1. requires an application
of the QZ algorithm at a cost of ${\mathcal O}(d^3)$. On the other hand, 
the total cost due to 3. is ${\mathcal O} (n d^2)$ for each one of the at most
$\kappa$ dominant poles, as line \ref{orthogonalize} involves two orthonormalizations with respect 
to $n\times d$ matrices with orthonormal columns. The constant hidden in this last ${\mathcal O}(\cdot)$ notation
is small.

As for item 2., at the $\ell$th subspace iteration, for every dominant pole estimate $\mu$,
an LU factorization of the $n\times n$ matrix $A - \mu E$ is computed once, and $\min\{ m , p \} (q+1)$
back and forward substitutions are applied to $n\times n$ triangular systems. Hence, taking all dominant 
pole estimates into account, at most $\kappa$ LU factorizations,  $\kappa \min \{ m , p \} (q+1)$ forward 
and back substitutions on systems of size $n\times n$ need to be performed per subspace iteration.
These tasks related to 2. typically dominate the computations, and determine the runtime. 
If the system at hand is such that LU factorizations, back and forward substitutions can be carried out
in linear time, say at a cost of $cn$ for a small constant $c$, then orthonormalization costs may also
be influential, but most often this is not the case. Usually, LU factorizations are the
most dominant factor. The number of forward  and back substitutions increases
linearly with respect to $m$ and $p$, so, for a system with many inputs and outputs, time required
for forward and back substitutions may be significant, perhaps as significant as LU factorizations.
The number of forward and back substitutions increases also linearly with respect to $q$, the parameter 
that determines the number of derivatives to be interpolated, yet in practice we choose $q$ small 
(e.g., $q = 1$), as this already ensures quadratic convergence.

% (which for a dense system costs 
% about $2n^3/3$, but can even have a quadratic or linear cost in $n$ for a sparse system). 

\section{Numerical Results}\label{sec:NR}
We have implemented Algorithm \ref{alg:SM} in Matlab taking the practical issues in Section \ref{sec:prac_details}
into account. Here, we perform numerical experiments with it in Matlab 2020b on an an iMac with Mac OS~12.1 
operating system, Intel\textsuperscript{\textregistered} Core\textsuperscript{\texttrademark} i5-9600K CPU and 32GB RAM.

Throughout, our implementation terminates
when $\mathsf{R}\mathsf{s}( \lambda^{(j)}_\ell , v^{(j)}_\ell ) < \texttt{tol}$
for $j = 1,\dots,\kappa\,$ for the tolerance $\texttt{tol} = 10^{-7}$ unless otherwise specified, where $\kappa$ is 
the number of dominant poles prescribed,  and the residual $\mathsf{R}\mathsf{s}( \lambda^{(j)}_\ell , v^{(j)}_\ell )$ is 
as in (\ref{eq:res}). 
As discussed in Section \ref{sec:init_subspaces}, the initial subspaces ${\mathcal V}_0$, ${\mathcal W}_0$ 
are chosen so that Hermite interpolation is attained at the ten most dominant poles of a crude reduced model 
obtained by applying the algorithm in \cite{Aliyev2017}, excluding Section \ref{sec:ndpsdim} where
we investigate the effect of the initial subspace dimension on the runtime. The interpolation parameter in Algorithm \ref{alg:SM}
and Theorem \ref{thm:main_interpolation} that determines how many derivatives of the reduced transfer
function and full transfer function match is set equal to $q = 1$ in all of the experiments.
We follow this practice even for the rectangular transfer functions with $m \neq p$.
The rapid convergence result and its derivation in Section \ref{sec:Rconv} apply to rectangular 
transfer functions when $q \geq 2$. Yet, we observe a quick convergence in the rectangular
setting on benchmark examples even with $q = 1$.

We assume throughout that the descriptor system at hand as in (\ref{eq:state_space}) has real coefficient 
matrices $A, E, B, C, D$. Under this assumption, the dominant poles come in conjugate pairs, i.e., if $z$ is 
one of the poles, unless $z$ is real, its conjugate $\overline{z}$ is also a pole with exactly the same dominance metric
as for $z$. Our implementation computes only the dominant poles with nonpositive imaginary parts;
in particular, we extract the dominant poles of the reduced problem with nonpositive imaginary parts and interpolate 
only at these poles at every subspace iteration. In the subsequent subsections of this section, what we refer as the most dominant 
$\kappa$ poles are indeed the most dominant $\kappa$ poles among the poles with nonpositive imaginary parts.
As a result, we count a conjugate pair of dominant poles only once and not twice. For instance, when we report the 
five most dominant poles and if those poles are not real, in reality the ten most dominant poles are computed.

In the next three subsections, we report the results by our implementation of Algorithm \ref{alg:SM} on benchmark examples 
for dominant pole estimation made available by Joost Rommes most of which can be accessed from
his website\footnote[2]{\url{http://sites.google.com/site/rommes/software}}, as well as benchmark examples from SLICOT collection 
for model reduction\footnote[3]{\url{http://slicot.org/20-site/126-benchmark-examples-for-model-reduction}}. Comparisons are provided 
with the subspace accelerated multiple-input-multiple-output (MIMO) dominant pole algorithm (SAMDP) \cite{RM2006b},
specifically with the implementation on Rommes' website.

\subsection{Convergence and Illustration of the Algorithm on the \texttt{M80PI\_n} Example}
We first illustrate the convergence of Algorithm \ref{alg:SM} on the \texttt{M80PI\_n} example. 
This is an example with $n = 4182$ and $m = p = 3$. When we attempt to compute the most dominant
pole, it takes only two iterations to fulfill the termination condition, that is the residual of the most dominant
pole estimate and the corresponding eigenvector estimate is less than $10^{-7}$ after two subspace iterations.
The iterates generated and the corresponding residuals are given in Table \ref{table:M80_conv1}.
The progress in the iterates in the table is consistent with at-least-quadratic-convergence 
assertion of Theorem \ref{thm:quick_convergence}.

\begin{table}
\centering
\caption{The iterates and corresponding residuals of Algorithm \ref{alg:SM} to find the most
dominant pole of the \texttt{M80PI\_n} example.}
\label{table:M80_conv1}
\begin{tabular}{c|cl}
$\ell$  & $\lambda^{(1)}_\ell$ 	& 	 \phantom{aa} $\mathsf{R}\mathsf{s}( \lambda^{(1)}_\ell , v^{(1)}_\ell )$  \\    
\hline
1 & $-$\underline{2.88547}6680562e$+$01 $-$ \underline{5.0734196}87488e$+$03${\rm i}$  	&  	\phantom{aa} $8.398\cdot 10^{-4}$ \\
2 & $-$\underline{2.885470736631}e$+$01 $-$ \underline{5.073419627243}e$+$03${\rm i}$ 	 &  	\phantom{aa} $1.044\cdot 10^{-13}$ 
\end{tabular}
\end{table}

The convergence behavior is similar when we attempt to locate multiple dominant poles. In Table \ref{table:Multiple_res},
the residuals for the iterates of Algorithm \ref{alg:SM} are listed to estimate the most dominant five poles of the 
 \texttt{M80PI\_n} example. Only three subspace iterations suffice to satisfy the termination condition, that is
 all residual are below the tolerance $10^{-7}$ after three iterations. In the first iteration, Hermite interpolation is performed 
 at all of the five dominant pole estimates as the residuals are larger than $10^{-7}$. At the second iteration, the most
 dominant four pole estimates are deemed as already converged, since their residuals are below the convergence
 threshold, while Hermite interpolation is imposed at the estimate for the fifth dominant pole.  
 Notably at a dominant pole estimate where Hermite interpolation is imposed in Table \ref{table:Multiple_res},
 the corresponding residual decreases considerably. In general, we have observed a similar convergence behavior
on other examples. Only that it sometimes happens that not much progress is achieved towards convergence
in the first few iterations. But once a better global approximation of the transfer function is obtained, in particular
when the estimates for the dominant poles are close to the actual ones, convergence is very fast.

\begin{table} % [H]
% \centering
\caption{The residuals of the iterates of Algorithm \ref{alg:SM} to 
compute the most dominant five poles of the \texttt{M80PI\_n} example. Interpolation is performed 
at the iterates whose residuals are typed in blue italic.}
\label{table:Multiple_res}
\small
\begin{tabular}{c|ccccc}
\hskip 1ex $\ell$ \hskip 2ex  &  $\mathsf{R}\mathsf{s}\big(\lambda^{(1)}_\ell, v^{(1)}_\ell\big)$   &$\mathsf{R}\mathsf{s}\big(\lambda^{(2)}_\ell, v^{(2)}_\ell\big)$  & $\mathsf{R}\mathsf{s}\big(\lambda^{(3)}_\ell, v^{(3)}_\ell\big)$  &$\mathsf{R}\mathsf{s}\big(\lambda^{(4)}_\ell, v^{(4)}_\ell\big)$ & $\mathsf{R}\mathsf{s}\big(\lambda^{(5)}_\ell, v^{(5)}_\ell\big)$ \\    
\hline
\hskip 1ex 1 \hskip 2ex &  \textcolor{blue}{$\mathit{8.398 \cdot 10^{-4}}$}  &  \textcolor{blue}{$\mathit{3.051 \cdot 10^{-2}}$}  &   \textcolor{blue}{$\mathit{2.500 \cdot 10^{-1} \; \:}$} &  \textcolor{blue}{$\mathit{8.419 \cdot 10^{-1}}$}  &  \textcolor{blue}{$\mathit{1.651 \cdot 10^{-1}}$} \\
\hskip 1ex 2 \hskip 2ex &  $1.488 \cdot 10^{-13}$  &  $7.642 \cdot 10^{-14}$  &   $4.413 \cdot 10^{-9} \; \:$   &    $1.185 \cdot 10^{-8}$   &  \textcolor{blue}{$\mathit{9.539 \cdot 10^{-5}}$} \\
\hskip 1ex 3 \hskip 2ex &  $5.219 \cdot 10^{-14}$   & $3.553 \cdot 10^{-14}$  &   $2.324 \cdot 10^{-9} \; \;$ &  $6.616 \cdot 10^{-9}$  &  $5.182 \cdot 10^{-13}$
\end{tabular}
\end{table}

The convergence of Algorithm \ref{alg:SM} on the \texttt{M80PI\_n} example is also illustrated in Figure \ref{fig:M80n_conv}.
To illustrate the convergence better, here we start with a very crude reduced system initially; see the red dashed curve
in Figure \ref{fig:1a}. (A more accurate initial reduced model is used in Table \ref{table:M80_conv1} and \ref{table:Multiple_res},
as we normally apply the algorithm in \cite{Aliyev2017} to construct the initial reduced system a little more rigorously, 
a common practice in our implementation excluding this visualization.) The initial reduced transfer function 
$H^{{\mathcal W}_0 , {\mathcal V}_0}_\red$ in Figure \ref{fig:1a} interpolates the original transfer function 
$H$ at complex points, whose imaginary parts are marked with crosses on the horizontal axis.
In the same figure, the blue circles mark the imaginary parts of the five most dominant poles of $H^{{\mathcal W}_0 , {\mathcal V}_0}_\red$.
These five most dominant poles are used for interpolation next, giving rise to $H^{{\mathcal W}_1 , {\mathcal V}_1}_\red$, the 
reduced transfer function in Figure \ref{fig:1b}. Similarly, the dominant poles of $H^{{\mathcal W}_1 , {\mathcal V}_1}_\red$
whose imaginary parts are marked with blue circles in Figure \ref{fig:1b} are the next set of interpolation points, leading to the reduced transfer
function $H^{{\mathcal W}_2 , {\mathcal V}_2}_\red$ in Figure \ref{fig:1c}. The largest singular value of the transfer function $H^{{\mathcal W}_3, {\mathcal V}_3}_\red$
after three iterations in Figure \ref{fig:1d} seems to capture the largest singular value of the full transfer function 
over the imaginary axis already quite well, and (at least) the imaginary parts of the dominant poles of
$H^{{\mathcal W}_3, {\mathcal V}_3}_\red$ and $H$ appear to be close from the figure.

\begin{figure} % [t]
 \centering
	\begin{tabular}{ll}
		\hskip -3ex
			 \subfigure[$\ell = 0$ \label{fig:1a}]{\includegraphics[width = .51\textwidth]{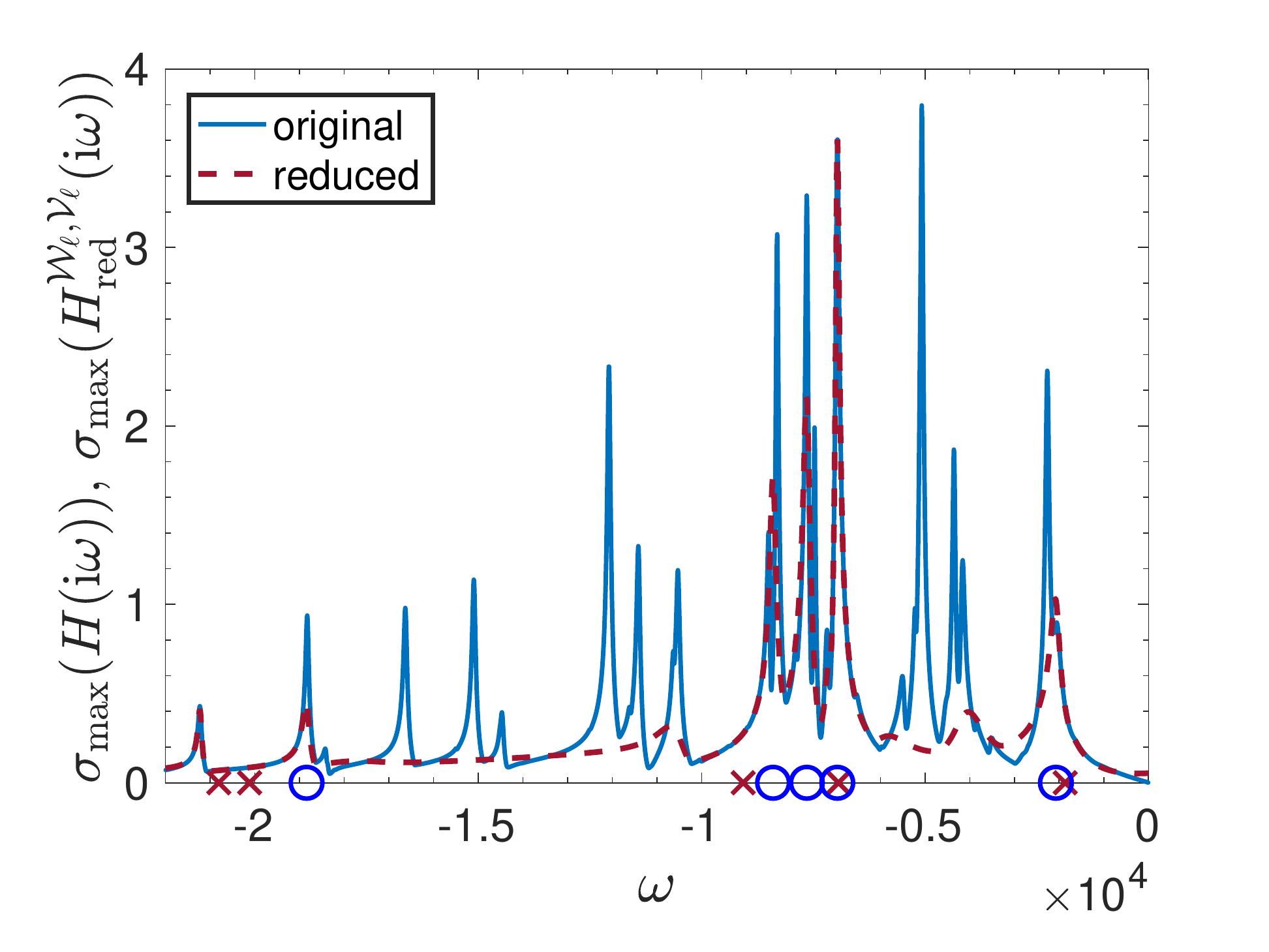}} & 
			 \subfigure[$\ell = 1$ \label{fig:1b}]{\includegraphics[width = .51\textwidth]{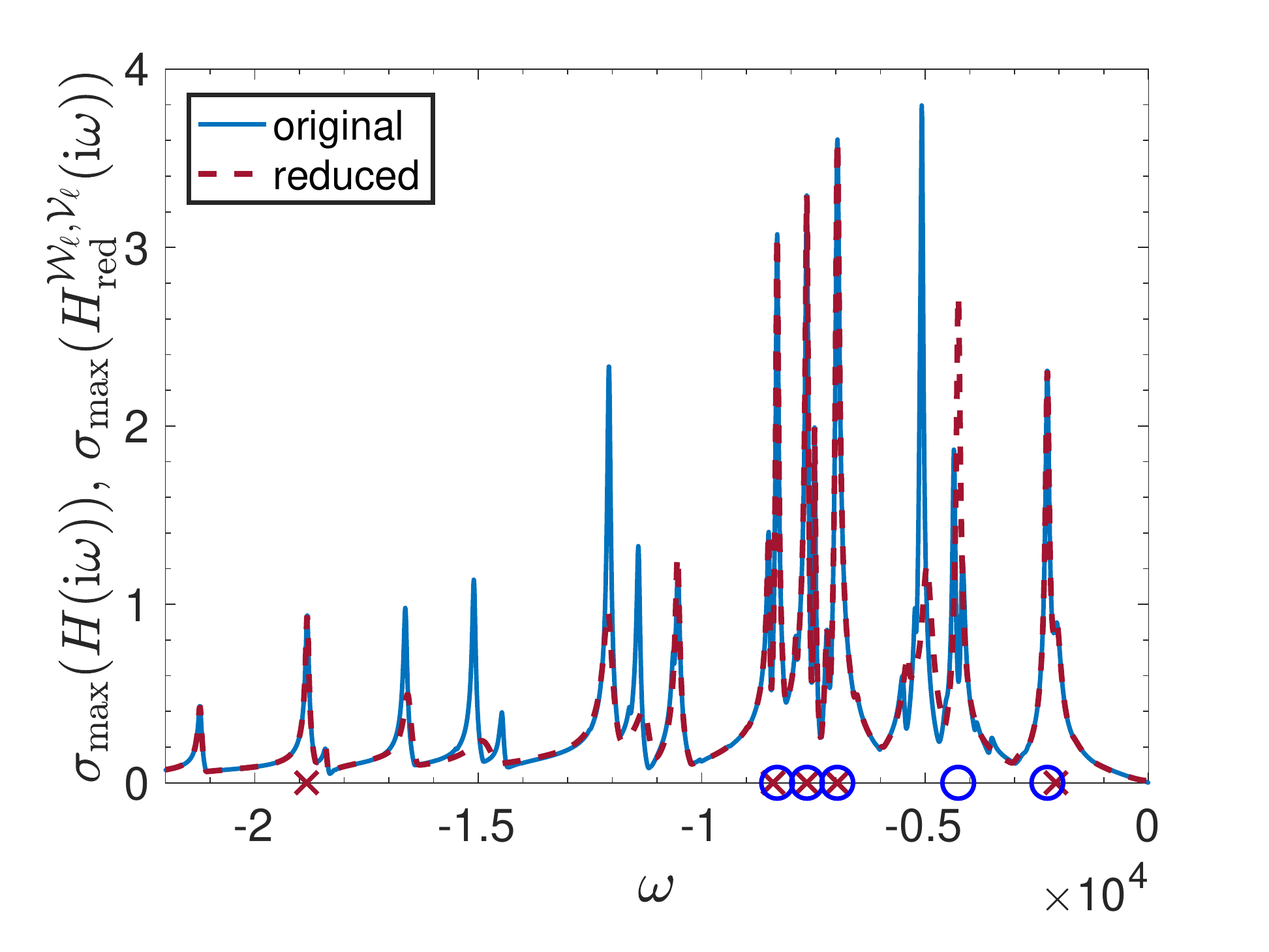}} \\  
		\hskip -3ex
			 \subfigure[$\ell = 2$ \label{fig:1c}]{\includegraphics[width = .51\textwidth]{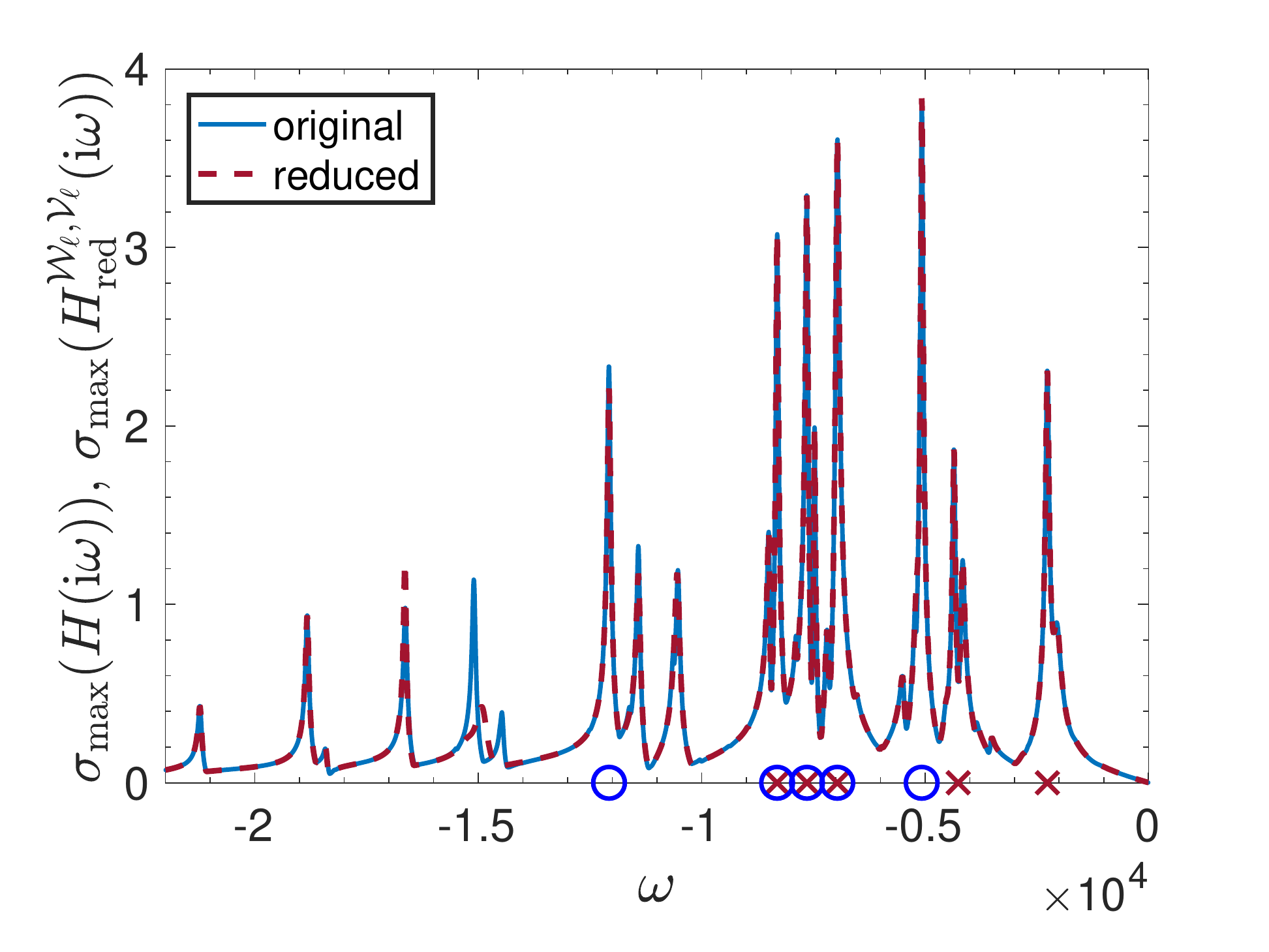}} &
			 \subfigure[$\ell = 3$ \label{fig:1d}]{\includegraphics[width = .51\textwidth]{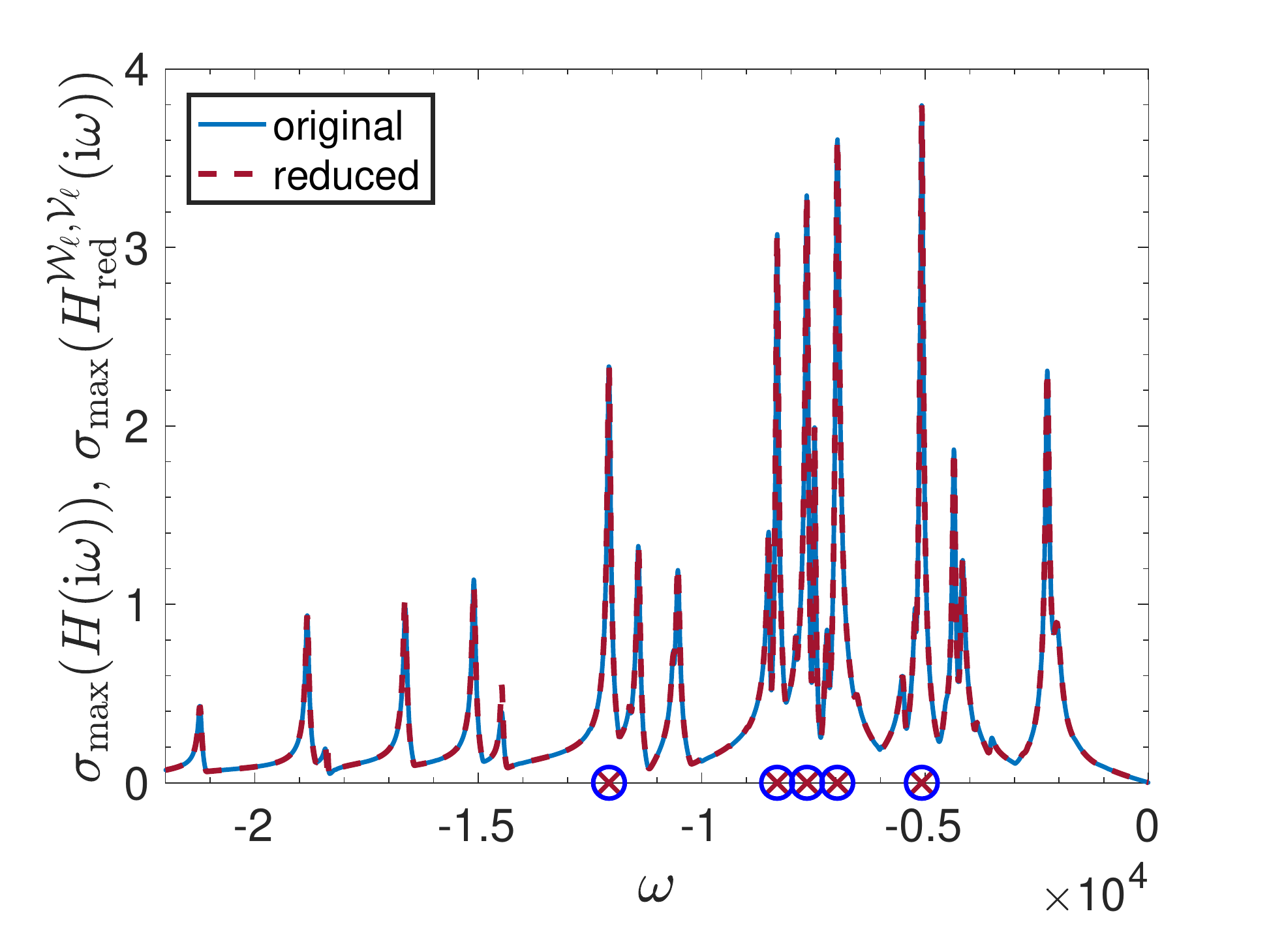}}	
		\end{tabular}   
		\caption{  
		The progress of Algorithm \ref{alg:SM} on the \texttt{M80PI\_n} example. The solid blue and red dashed curves
		are the plots of the largest singular values of $H({\rm i} \omega)$ and $H^{{\mathcal W}_\ell , {\mathcal V}_\ell}_\red({\rm i} \omega)$
		as functions of $\omega \in {\mathbb R}$. The red crosses and blue circles on the horizontal axis mark the imaginary parts
		of the interpolation points employed and dominant poles of  $H^{{\mathcal W}_\ell , {\mathcal V}_\ell}_\red$, respectively.
		}
		\label{fig:M80n_conv}
\end{figure}

\subsection{Results on Benchmark Examples}\label{sec:numexp_bench}
A comparison of our implementation of Algorithm \ref{alg:SM} and the subspace accelerated MIMO dominant 
pole algorithm (SAMDP) \cite{RM2006b} on 11 benchmark examples is provided in Table \ref{tab:results_bm}.

\textit{Parameter Values.}
Both algorithms are initialized with exactly the same ten points in the complex plane, namely the ten most
dominant poles of the reduced system produced by the algorithm in \cite{Aliyev2017}. As a result, the initial 
subspace dimension for Algorithm \ref{alg:SM} is $20 \cdot \min\{ m , p \}$, i.e., recall that $q = 1$ throughout,
which means that each interpolation point requires the inclusion of a subspace of dimension $2 \cdot \min \{ m, p \}$. 
We use the implementation of SAMDP with its default parameter settings with one exception, namely
the parameter that determines how LU factorizations should be computed. To be fair, both Algorithm \ref{alg:SM} 
and SAMDP employ the command \texttt{lu(A)}, when an LU factorization of $A$ needs to be computed.
Moreover, both algorithms solves the linear systems by exploiting the LU factorizations using the same sequence
of commands based on backslashes. The default minimum and maximum subspace dimensions for SAMDP
are 1 and 10, which we rely on in the numerical experiments. This means that SAMDP restarts the
projection subspaces in case they become of dimension greater than 10, and the restarted subspaces are of dimension 1.
In contrast, our implementation of Algorithm \ref{alg:SM} does not employ a restart strategy.
Our observation is that the performance of SAMDP remains nearly the same even without the restart strategy.

\textit{Comments on the Results.}
For all of these examples, we estimate the five most dominant poles of the full system using these two algorithms. For the first
two smaller problems (\texttt{CDplayer}, \texttt{iss}), the algorithms return exactly the same five dominant poles up to
prescribed tolerances. But for all other 9 examples, Algorithm \ref{alg:SM} returns consistently more
dominant poles compared to SAMDP; see the numbers inside the parentheses in the columns under ``Five Most
Dominant Poles'' in Table  \ref{tab:results_bm}, which represent the dominance metrics of the computed poles 
as in equation (\ref{eq:dp_criterion}). There are even examples for which three or four of the five
most dominant poles by Algorithm \ref{alg:SM} are more dominant than the ones computed by SAMDP; 
see, e.g., the \texttt{M10PI\_n} example for which only the most dominant poles are the same and
the remaining four differ in favor of Algorithm \ref{alg:SM} when their dominance metric is taken into consideration.
On smaller examples, specifically on the \texttt{CDplayer}, \texttt{iss}, \texttt{S40PI\_n}, \texttt{M010PI\_n} examples,
we have also computed all of the poles using the QR or QZ algorithm (i.e., using \texttt{eig} in Matlab), and verified
that the five most dominant poles by Algorithm \ref{alg:SM} listed in Table \ref{tab:results_bm} are indeed
the five most dominant poles up to the prescribed tolerances.

 % M80PI_n   ---   
  % -2.885476680562336e+01 - 5.073419687488412e+03i
    % -2.885470736631000e+01 - 5.073419627243558e+03i
    %
    % Residuals
    % 8.397652369576643e-04
     % 1.044062756464124e-13

%     8.397652369576643e-04     3.051093814398061e-02     2.499762291512923e-01
%     1.488043293282366e-13     7.642430651911966e-14     4.412630841988796e-09
%     4.863026416377905e-13     8.623244853271760e-14     6.557533142310735e-10
%
%   8.419469153791299e-01     1.650893974895581e-01
%     1.184874594966369e-08     9.539213724455385e-05
%     2.467988695458547e-13     3.998942351678587e-13

%%%% default  25, 10
%%% *  35, 15  (S40PI_n,   bips07_1998,   xingo_afonso_itaipu)
%%% + 10, 10

\begin{table} % [H]
 \caption{A comparison of Algorithm \ref{alg:SM} with SAMDP \cite{RM2006b} on 11 benchmark examples. 
 The numbers inside the parentheses in the columns under ``Five Most Dominant Poles'' are the dominance metrics of the computed poles as in equation (\ref{eq:dp_criterion}).
 Moreover, for each example, number of subspace iterations performed by Algorithm \ref{alg:SM} until termination is given in the column of $n_{\rm iter}$.} 
 \label{tab:results_bm}
\scriptsize
\hskip -8ex
 \begin{tabular}{c|c|c|c|r|r|cc}
\hskip -3ex  	& &   & & \multicolumn{2}{c}{Five Most Dominant Poles} & \multicolumn{2}{c}{time in s} \\[.3em]
\hskip -3ex   $\#$ & example & n,m,p  & $n_{\rm iter}$ & SAMDP \cite{RM2006b} \phantom{aaaa} & Alg.~\ref{alg:SM} \phantom{aaaaaaaaaa} & \cite{RM2006b}  & Alg.~\ref{alg:SM} \\
  \hline 
% \vskip .3ex
%%%%%%%  
%%%%%%%
\hskip -3ex		&			&		&									&	1.0e$+$02 $\cdot$ \phantom{aaaaa(2.32e$+$06)}	&		1.0e$+$02 $\cdot$ \phantom{aaaaa(2.32e$+$06)}					&	&		\\[.2em]
%%%%%%%
\hskip -3ex  		&			&		&				&		 -0.0023 $\pm$ 0.2257{\rm i} (2.32e$+$06)	&	-0.0023 $\pm$ 0.2257{\rm i} (2.32e$+$06)		&			&		\\[.2ex]
%%%%%%%2
\hskip -3ex &		&			&						&		-0.1227 $\pm$ 3.0654{\rm i} (3.36e$+$03)	&	 -0.1227 $\pm$ 3.0654{\rm i} (3.36e$+$03)		&		&		\\[.2ex]
%%%%%%%
\hskip -3ex 1 	&	 \texttt{CDplayer} 		&	120,2,2		&	1		&		-0.0781 $\pm$ 0.7775{\rm i} (5.56e$+$02)		&	-0.0781 $\pm$ 0.7775{\rm i} (5.56e$+$02)			&			0.04		&		0.01		\\[.2ex]
%%%%%%%
\hskip -3ex		&		&			&				&	 -0.1976 $\pm$ 1.9658{\rm i} (2.91e$+$02)		&	 -0.1976 $\pm$ 1.9658{\rm i} (2.91e$+$02)	&	&		\\[.2ex]
%%%%%%%
\hskip -3ex		&  		& 			&				 &  	 -0.0742 $\pm$ 0.7382{\rm i} (2.27e$+$02)	&   -0.0742 $\pm$ 0.7382{\rm i} (2.27e$+$02)     &      &   \\[.3em] 
\hline
%%%%%%%  
%%%%%%%
%%%%%%%
\hskip -3ex  	&		&			&				&		-0.0039 $\pm$ 0.7751{\rm i} (1.16e$-$01)	&	-0.0039 $\pm$ 0.7751{\rm i} (1.16e$-$01)		&			&		\\[.2ex]
%%%%%%%
\hskip -3ex &		&			&				&		 -0.0100 $\pm$ 1.9920{\rm i} (3.38e$-$02)	&	 -0.0100 $\pm$ 1.9920{\rm i} (3.38e$-$02)		&		&		\\[.2ex]
%%%%%%%
\hskip -3ex 2 	&	 \texttt{iss} 		&	270,3,3		&	2		&		-0.0424 $\pm$ 8.4808{\rm i}  (1.20e$-$02)		&	-0.0424 $\pm$ 8.4808{\rm i} (1.20e$-$02)			&			0.02		&		0.02		\\[.2ex]
%%%%%%%
\hskip -3ex		&		&			&				&	 -0.1899 $\pm$ 37.9851{\rm i}  (1.07e$-$02)		&	-0.1899 $\pm$ 37.9851{\rm i} (1.07e$-$02)	&	&		\\[.2ex]
%%%%%%%
\hskip -3ex		&  		& 			&				 &  -0.0462 $\pm$ 9.2336{\rm i} (6.24e$-$03)	&    -0.0462 $\pm$ 9.2336{\rm i} (6.24e$-$03)     &      &   \\[.3em]
 \hline
%%%%%%%  
%%%%%%%
\hskip -3ex	&		&		&									&	1.0e$+$04 $\cdot$ \phantom{aaaaa(3.29e$+$00)}	&		1.0e$+$04 $\cdot$ \phantom{aaaaa(3.29e$+$00)}					&	&		\\[.2em]
%%%%%%%
\hskip -3ex  	&		&		&									&	-0.0041 $\pm$ 0.6962{\rm i} (3.29e$+$00)			&	-0.0041 $\pm$ 0.6962{\rm i} (3.29e$+$00)	&	&		\\[.2ex]
%%%%%%%
\hskip -3ex 	&		&			&								&	 -0.0032 $\pm$ 0.7637{\rm i} (3.21e$+$00)	&	 -0.0032 $\pm$ 0.7637{\rm i} (3.21e$+$00)		&		&		\\[.2ex]
%%%%%%%
\hskip -3ex 3 	&	  \texttt{S40PI\_n}		&	2182,1,1		&	3		&		-0.0021 $\pm$ 0.7466{\rm i} (1.66e$+$00)		&	-0.0021 $\pm$ 0.7466{\rm i} (1.66e$+$00)			&			0.12		&		0.07		\\[.2ex]
%%%%%%%
\hskip -3ex		&		&			&							&	 -0.0036 $\pm$ 1.2053{\rm i} (1.26e$+$00)		&	 -0.0032 $\pm$ 0.4351{\rm i} (1.63e$+$00)	&	&		\\[.2ex]
%%%%%%%
\hskip -3ex		&  		& 			&							 &  	-0.0042 $\pm$ 1.0519{\rm i} (9.13e$-$01)	&   -0.0036 $\pm$ 1.2053{\rm i}  (1.26e$+$00)     &      &   \\[.3em]
\hline
%%%%%%%  
%%%%%%%
\hskip -3ex	&		&		&									&	1.0e$+$04 $\cdot$ \phantom{aaaaa(3.31e$+$00)}	&		1.0e$+$04 $\cdot$ \phantom{aaaaa(3.31e$+$00)}					&	&		\\[.2em]
%%%%%%%
\hskip -3ex  	&		&		&									&	-0.0041 $\pm$ 0.6965{\rm i} (3.31e$+$00)		&	 -0.0041 $\pm$ 0.6965{\rm i} (3.31e$+$00)	&	&		\\[.2ex]
%%%%%%%
\hskip -3ex 	&		&			&								&	-0.0032 $\pm$ 0.7642{\rm i} (3.17e$+$00)		&	 -0.0032 $\pm$ 0.7642{\rm i} (3.17e$+$00)		&		&		\\[.2ex]
%%%%%%%
\hskip -3ex 4 	&	  \texttt{S80PI\_n}		&	4182,1,1		&	5		&	-0.0021 $\pm$ 0.7474{\rm i} (1.69e$+$00)		&	-0.0021 $\pm$ 0.7474{\rm i} (1.69e$+$00)			&			0.22		&		0.21		\\[.2ex]
%%%%%%%
\hskip -3ex		&		&			&							&	-0.0036 $\pm$ 1.2073{\rm i} (1.25e$+$00)		&	-0.0032 $\pm$ 0.4351{\rm i} (1.63e$+$00)				&	&		\\[.2ex]
%%%%%%%
\hskip -3ex		&  		& 			&							 &     -0.0043 $\pm$ 1.0532{\rm i}  (9.54e$-$01) 	&   -0.0036 $\pm$ 1.2073{\rm i}  (1.25e$+$00)    &      &   \\[.3em]
\hline
%%%%%%%  
%%%%%%%
\hskip -3ex	&		&		&									&	1.0e$+$04 $\cdot$ \phantom{aaaaa(3.99e$+$00)}	&		1.0e$+$04 $\cdot$ \phantom{aaaaa(3.99e$+$00)}					&	&		\\[.2em]
%%%%%%%
\hskip -3ex  	&		&		&									&	-0.0028 $\pm$ 0.5035{\rm i} (3.99e$+$00)		&	-0.0028 $\pm$ 0.5035{\rm i} (3.99e$+$00)	&	&		\\[.2ex]
%%%%%%%
\hskip -3ex 	&		&			&								&	-0.0040 $\pm$ 0.2262{\rm i} (2.17e$+$00)		&	 -0.0032 $\pm$ 0.7530{\rm i} (3.98e$+$00)		&		&		\\[.2ex]
%%%%%%%
\hskip -3ex 5 	&	  \texttt{M10PI\_n}		&	682,3,3		&	5		&	-0.0032 $\pm$ 0.4340{\rm i} (1.86e$+$00)		&	-0.0028 $\pm$ 0.8126{\rm i} (2.92e$+$00)			&			0.07		&		0.08		\\[.2ex]
%%%%%%%
\hskip -3ex		&		&			&							&	-0.0043 $\pm$ 1.0316{\rm i} (1.12e$+$00)		&	-0.0041 $\pm$ 0.6905{\rm i} (2.90e$+$00)				&	&		\\[.2ex]
%%%%%%%
\hskip -3ex		&  		& 			&							 &    -0.0049 $\pm$ 0.4153{\rm i}  (9.59e$-$01) 		&     -0.0036 $\pm$ 1.1670{\rm i} (2.24e$+$00)    &      &   \\[.3em]
\hline
%%%%%%%  
%%%%%%%
%%%%%%%
\hskip -3ex  	&		&		&									&	-0.8144 $\pm$ 5.3836{\rm i} (1.49e$+$02)		&	-0.8144 $\pm$ 5.3836{\rm i} (1.49e$+$02)	&	&		\\[.2ex]
%%%%%%%
\hskip -3ex 	&		&			&								&	-0.3471 $\pm$ 6.6647{\rm i} (3.25e$+$01)		&	-7.6146 $\pm$ 1.8625{\rm i} (5.75e$+$01)		&		&		\\[.2ex]
%%%%%%%
\hskip -3ex 6 	&	  \texttt{bips07\_1998}		&	15066,4,4		&	6		&	-1.0365 $\pm$ 6.3718{\rm i} (3.09e$+$01)		&	-8.9412 $\pm$ 0.2616{\rm i} (5.25e$+$01)			&			8.17		&		5.35		\\[.2ex]
%%%%%%%
\hskip -3ex		&		&			&							&	-0.7473 $\pm$ 6.2077{\rm i}  (1.84e$+$01)		&	-0.3471 $\pm$ 6.6647{\rm i} (3.25e$+$01)				&	&		\\[.2ex]
%%%%%%%
\hskip -3ex		&  		& 			&							 &  -0.6254 $\pm$ 6.4503{\rm i} (6.77e$+$00) 		&   -1.0365 $\pm$ 6.3718{\rm i} (3.09e$+$01)		   &      &   \\[.3em]
\hline
%%%%%%%  
%%%%%%%
%%%%%%%
\hskip -3ex  	&		&		&									&	-0.7181 $\pm$ 5.3405{\rm i} (1.92e$+$02)		&	-0.7181 $\pm$ 5.3405{\rm i} (1.92e$+$02)	&	&		\\[.2ex]
%%%%%%%
\hskip -3ex 	&		&			&								&	-0.8114 $\pm$ 6.1083{\rm i} (5.77e$+$01)		&	-7.5029 $\pm$ 2.2357{\rm i} (7.33e$+$01)		&		&		\\[.2ex]
%%%%%%%
\hskip -3ex 7 	&	 \texttt{bips07\_3078}		&	21128,4,4		&	8		&	-0.8894 $\pm$ 6.1706{\rm i} (4.73e$+$01)		&	-0.8114 $\pm$ 6.1083{\rm i} (5.77e$+$01)			&			1.24		&		2.69		\\[.2ex]
%%%%%%%
\hskip -3ex		&		&			&							&	-0.9818 $\pm$ 6.3935{\rm i} (2.87e$+$01)		&	-6.0025 $\pm$ 0.0885{\rm i} (4.87e$+$01)				&	&		\\[.2ex]
%%%%%%%
\hskip -3ex		&  		& 			&							 & -0.5966 $\pm$ 6.2367{\rm i} (1.04e$+$01) 		&    -0.8894 $\pm$ 6.1706{\rm i} (4.73e$+$01)    &      &   \\[.3em]
\hline
%%%%%%%  
%%%%%%%
%%%%%%%
\hskip -3ex  	&		&		&									&	-1.2798 $\pm$ 8.0934{\rm i} (4.52e$-$01)		&	-0.0153 $\pm$ 1.0916{\rm i} (4.04e$+$00)	&	&		\\[.2ex]
%%%%%%%
\hskip -3ex 	&		&			&								&	-1.1021 $\pm$ 7.9841{\rm i} (1.18e$-$01)		&	-1.2798 $\pm$ 8.0934{\rm i} (4.52e$-$01)		&		&		\\[.2ex]
%%%%%%%
\hskip -3ex 8 	&	\texttt{xingo\_afonso\_itaipu}		&	13250,1,1		&	7		&	-1.2713 $\pm$ 8.5866{\rm i} (7.94e$-$02)		&	-14.2871 $\pm$ 5.9445{\rm i} (2.03e$-$01)			&			0.67		&		0.87		\\[.2ex]
%%%%%%%
\hskip -3ex		&		&			&							&	-1.2180 $\pm$ 8.1776{\rm i} (3.73e$-$02)		&	-0.9002 $\pm$ 7.6356{\rm i} (1.36e$-$01)				&	&		\\[.2ex]
%%%%%%%
\hskip -3ex		&  		& 			&							 &  -1.4127 $\pm$ 8.0192{\rm i} (3.16e$-$02) 		&   -0.5987 $\pm$ 0.3306{\rm i} (1.31e$-$01)    &      &   \\[.3em]
\hline
%%%%%%%  
%%%%%%%
%%%%%%%
\hskip -3ex  	&		&		&									&	-0.5208 $\pm$ 2.8814{\rm i} (1.45e$-$03)		&	-0.0335 $\pm$ 1.0787{\rm i} (2.76e$-$03)	&	&		\\[.2ex]
%%%%%%%
\hskip -3ex 	&		&			&								&	-0.5567 $\pm$ 3.6097{\rm i} (1.34e$-$03)		&	-0.5208 $\pm$ 2.8814{\rm i} (1.45e$-$03)		&		&		\\[.2ex]
%%%%%%%
\hskip -3ex 9 	&	 \texttt{ww\_vref\_6405}		&	13251,1,1		&	5		&	-0.1151 $\pm$ 0.2397{\rm i} (9.36e$-$04)		&	-0.5567 $\pm$ 3.6097{\rm i} (1.34e$-$03)			&			1.06		&		0.71		\\[.2ex]
%%%%%%%
\hskip -3ex		&		&			&							&	-0.1440 (8.94e$-$05)		&	-2.9445 $\pm$ 4.8214{\rm i} (1.03e$-$03)				&	&		\\[.2ex]
%%%%%%%
\hskip -3ex		&  		& 			&							 &  -0.6926 $\pm$ 3.2525{\rm i} (3.47e$-$05) 		&  -0.1151 $\pm$ 0.2397{\rm i} (9.36e$-$04)    &      &   \\[.3em]
\hline
%%%%%%%  
%%%%%%%
%%%%%%%
\hskip -3ex  	&		&		&									&	-0.3179 $\pm$ 1.0437{\rm i} (7.39e$-$02)		&	-17.0961 $\pm$ 0.0919{\rm i} (8.94e$-$02)	&	&		\\[.2ex]
%%%%%%%
\hskip -3ex 	&		&			&								&	-0.3464 $\pm$ 0.5796{\rm i} (5.54e$-$02)		&	-0.3179 $\pm$ 1.0437{\rm i} (7.39e$-$02)	&		&		\\[.2ex]
%%%%%%%
\hskip -3ex 10 	&	 \texttt{mimo8x8\_system}		&	13309,8,8		&	3		&	-0.7168 $\pm$ 0.1238{\rm i} (2.01e$-$02)			&	-0.3464 $\pm$ 0.5796{\rm i} (5.54e$-$02)		&			0.62		&		2.11		\\[.2ex]
%%%%%%%
\hskip -3ex		&		&			&							&	-9.3974  (1.50e$-$02)		&	-16.7938  (4.48e$-$02)				&	&		\\[.2ex]
%%%%%%%
\hskip -3ex		&  		& 			&							 &  -11.2079 (1.15e$-$02) 		&  -0.8185 $\pm$ 0.5909{\rm i} (3.05e$-$02)	    &      &   \\[.3em]
\hline
%%%%%%%  
%%%%%%%
%%%%%%%
\hskip -3ex  	&		&		&									&	-0.0073  (6.10e$+$02)	&	-0.1170 $\pm$ 0.2746{\rm i} (8.85e$+$02)	&	&		\\[.2ex]
%%%%%%%
\hskip -3ex 	&		&			&								&	-0.0106  (4.86e$+$02)	&	-0.0073  (6.10e$+$02)		&		&		\\[.2ex]
%%%%%%%
\hskip -3ex 11 	&	  \texttt{juba40k}		&	40337,2,1		&	7		&	-0.0016  (3.83e$+$02)		&	-0.0106  (4.86e$+$02)			&			2.70		&		2.57		\\[.2ex]
%%%%%%%
\hskip -3ex		&		&			&							&	 -0.0078  (3.77e$+$02)	&	-0.0016  (3.83e$+$02)				&	&		\\[.2ex]
%%%%%%%
\hskip -3ex		&  		& 			&							 &      -0.0048  (1.59e$+$01) 		&     -0.0078  (3.77e$+$02)    &      &   \\[.3em]
%%%%%%%  
%%%%%%%
%%%%%%%
\end{tabular}
\end{table}

The runtimes of the two algorithms listed in the last column of Table \ref{tab:results_bm} are similar; one or the 
other is a little faster in some cases, but there does not appear any substantial difference in the runtimes. 
A better insight into the runtimes can be obtained from Table \ref{tab:results_bm2}. It is apparent
from this table that Algorithm \ref{alg:SM} performs fewer number of LU factorizations consistently.
On the other hand, SAMDP usually requires fewer number of linear system solves. One main factor
that determines which algorithm has a better runtime is the computational cost of an LU factorization as  
compared to that of solving triangular systems. On examples where LU factorization computations
are considerably expensive, it is reasonable to expect that Algorithm \ref{alg:SM} would have a better runtime. 
But, as the benchmark examples are sparse and even nearly banded at times, it is sometimes
the case that LU factorizations are cheap to obtain, only at a cost of a small constant times that of 
triangular systems. Such systems seem to favor SAMDP. This seems to be that case for instance 
with the \texttt{bips07\_3078} example. The second example for which SAMDP has notably smaller
runtime is the \texttt{mimo8x8\_system} example with $m = p = 8$; as $m$ and $p$ are relatively
larger, Algorithm \ref{alg:SM} needs to solve quite a few triangular systems causing the difference
in the runtimes. In general, the larger $m$ and $p$ are, the more triangular systems need to
be solved and more computational work is required by Algorithm \ref{alg:SM}, even though it should 
still converge quickly in a few iterations.

\begin{table} % [H]
 \caption{ The number of LU factorizations ($\#\,$LU) and number of linear system solves ($\#\:$lin sol) 
 performed by SAMDP and Algorithm \ref{alg:SM} on 11 benchmark examples are listed. Additionally,
 the number of restarts ($\#\,$res) for SAMDP, and the subspace dimension at termination (sdim),
 the time for the construction of the initial subspaces in seconds (init sub) for Algorithm \ref{alg:SM} are provided.  } 
 \label{tab:results_bm2}
\scriptsize
% \hskip -8ex
 \begin{tabular}{c|c|ccc|cccc}
		 		& 		 &    \multicolumn{3}{c|}{SAMDP \cite{RM2006b}} &  \multicolumn{4}{c}{Alg.~\ref{alg:SM}} \\[.3em]
		  $\#$ 	& example &	$\#$ LU	&	$\#$ lin sol	&	$\#$ res 				&	$\#$ LU	&	$\#$ lin sol		&	sdim		&	init sub		 \\[.2em]
\hline
1 	&	 \texttt{CDplayer}			&	25	&	40	&			0				&		10			&		80		&	40			&	0.01		\\[.2em]
2 	&	 \texttt{iss} 				&	21	&	34	&			0				&		11			&		132		&	66			&	0.01		\\[.2em]
3 	&	  \texttt{S40PI\_n}			&	41	&	64	&			1				&		15			&		60		&	30			&	0.04		\\[.2em]
4 	&	  \texttt{S80PI\_n}			&	43	&	67	&			1				&		20			&		80		&	40			&	0.09		\\[.2em]
5 	&	  \texttt{M10PI\_n}			&	53	&	82	&			2				&		18			&		216		&	108			&	0.03		\\[.2em]	
6 	&	  \texttt{bips07\_1998}		&	52	&	80	&			2				&		22			&		352		&	176			&	2.17		\\[.2em]
7 	&	 \texttt{bips07\_3078}		&	41	&	64	&			1				&		23			&		368		&	184			&	0.92		\\[.2em]
8 	&	\texttt{xingo\_afonso\_itaipu}	&	35	&	55	&			1				&		26			&		104		&	52			&	0.30		\\[.2em]
9 	&	 \texttt{ww\_vref\_6405}		&	61	&	94	&			2				&		22			&		88		&	44			&	0.29		\\[.2em]
10 	&	 \texttt{mimo8x8\_system}		&	29	&	46	&			0				&		17			&		544		&	272			&	1.04		\\[.2em]
11 	&	  \texttt{juba40k}			&	51	&	79	&			1				&		24			&		144		&	48			&	0.92		\\[.2em]
\end{tabular}
\end{table}

In any case, the main conclusion that can be drawn from these experiments is that Algorithm \ref{alg:SM}
is more robust than SAMDP in converging to the most dominant poles. SAMDP,
together with some of the most dominant poles, seems to converge also some of the less dominant poles.

%\begin{table} % [H]
% \caption{ ... } 
% \label{tab:results_bm2}
%\scriptsize
%\hskip -8ex
% \begin{tabular}{c|c|ccc|ccccc}
%		 		& 		 &    \multicolumn{3}{c|}{SAMDP \cite{RM2006b}} &  \multicolumn{5}{c}{Alg.~\ref{alg:SM}} \\[.3em]
%		  $\#$ 	& example &	$\#$ LU	&	$\#$ lin sol	&	$\#$ res 				&	$\#$ LU	&	$\#$ lin sol	&	sdim (b)		&	sdim	(t)	&	init sub		 \\[.2em]
%\hline
%1 	&	 \texttt{CDplayer}			&	25	&	40	&			0				&		10			&		80		&	40			&	40			&	0.01		\\[.2em]
%2 	&	 \texttt{iss} 				&	21	&	34	&			0				&		11			&		132		&	60			&	66			&	0.01		\\[.2em]
%3 	&	  \texttt{S40PI\_n}			&	41	&	64	&			1				&		15			&		60		&	20			&	30			&	0.04		\\[.2em]
%4 	&	  \texttt{S80PI\_n}			&	43	&	67	&			1				&		20			&		80		&	20			&	40			&	0.09		\\[.2em]
%5 	&	  \texttt{M10PI\_n}			&	53	&	82	&			2				&		18			&		216		&	60			&	108			&	0.03		\\[.2em]
%6 	&	  \texttt{M80PI\_n}			&	47	&	73	&			1				&		16			&		192		&	60			&	96			&	0.11		\\[.2em]	
%7 	&	  \texttt{bips07\_1998}		&	52	&	80	&			2				&		22			&		352		&	80			&	176			&	2.17		\\[.2em]
%8 	&	 \texttt{bips07\_3078}		&	41	&	64	&			1				&		23			&		368		&	80			&	184			&	0.92		\\[.2em]
%9 	&	\texttt{xingo\_afonso\_itaipu}	&	35	&	55	&			1				&		26			&		104		&	20			&	52			&	0.30		\\[.2em]
%10 	&	 \texttt{ww\_vref\_6405}		&	61	&	94	&			2				&		22			&		88		&	20			&	44			&	0.29		\\[.2em]
%11 	&	 \texttt{mimo8x8\_system}		&	29	&	46	&			0				&		17			&		544		&	160			&	272			&	1.04		\\[.2em]
%12 	&	  \texttt{juba40k}			&	51	&	79	&			1				&		24			&		144		&	20			&	48			&	0.92		\\[.2em]
%\end{tabular}
%\end{table}

\subsection{Effect of Initial Subspace Dimension and Number of Dominant Poles Sought on Runtime}\label{sec:ndpsdim}
Here, we illustrate how the initial subspace dimension and desired number of dominant poles affect the runtime
of Algorithm \ref{alg:SM} on the Brazilian power plant example \texttt{bips98\_1450} with $n = 15066$, $m = p = 4$. 
The parameter settings are as in the previous subsection except, in one of the experiments, we vary the initial subspace dimension,
keeping the number of dominant poles fixed at 5, and, in the second experiment, the number of dominant
poles is varied while keeping the number of initial interpolation points fixed at 15.

The results of the first experiment are displayed in the left-hand plot in Figure \ref{fig:nde_sdim}.
The initial subspace dimension is increased by the way of increasing the number of initial interpolation points.
As usual, the initial interpolation points are selected as the dominant poles of a reduced system obtained
from an application of the algorithm in \cite{Aliyev2017}. Initially, when number of initial interpolation points is about 8-12, 
the runtime does not change by much, but then it increases more or less monotonically.  The reason is that the
number of subspace iterations is already small at about 3-4 with the number of initial interpolations
about 8-12. For larger number of interpolation points, the number of subspace iterations
still remains about 3-4, but there is the additional cost of interpolating at further points in the form
of computing further LU factorizations and solving triangular systems.  This is a behavior we typically
observe on the benchmark examples. A small number of initial interpolation points is usually
sufficient for accuracy at a reasonable computational burden.

In the second experiment whose results are shown on the right-hand side in Figure \ref{fig:nde_sdim},
the runtime of Algorithm \ref{alg:SM} usually increases as the number of dominant poles sought is increased.
Only in this example we set the termination tolerance on the residuals as $\texttt{tol} = 10^{-6}$
to avoid convergence difficulties for larger number of dominant poles.
The increase in the runtime is in harmony with the increase in the number of LU factorizations performed,
also depicted in the plot. The plot of the number of triangular systems solved, the other important factor
affecting the runtime, with respect to the number of dominant poles is omitted, as it resembles the
one for LU factorizations scaled up by about a factor of 16. This behavior of runtime as a function
of number of dominant poles is expected in general, but how quickly the runtime and number of LU factorizations
grow vary from example to example. In this particular example, the runtime of SAMDP grows faster than
that of Algorithm \ref{alg:SM} with respect to the number of dominant poles.

Finally, the computed ten most dominant poles of the \texttt{bips98\_1450} example together with
the dominance metrics (inside parentheses) are
\begin{equation*}
	\begin{split}
&	\lambda_1		\;	=	\;	-0.5974 \pm 5.4850{\rm i} \, (1.52{\rm e} + 02), 		\quad\;\;		\lambda_6		\;	=	\;	-1.2726 \pm 9.8846{\rm i} \, (2.51{\rm e}+01),  \\
&	\lambda_2		\;	=	\;	-0.4692 \pm 4.5838{\rm i} \, (1.03{\rm e} + 02), 		\quad\;\;		\lambda_7		\;	=	\;	-30.5404 \pm 9.7736{\rm i} \, (2.23{\rm e}+01),  \\	
&	\lambda_3		\;	=	\;	-7.3723 \pm 2.7152{\rm i} \, (8.34{\rm e} + 01), 		\quad\;\;		\lambda_8		\;	=	\;	-2.8731 \pm 4.0364{\rm i} \, (2.14{\rm e}+01),  \\
&	\lambda_4		\;	=	\;	-0.9818 \pm 6.3822{\rm i} \, (2.96{\rm e} + 01), 		\quad\;\;		\lambda_9		\;	=	\;	-0.6466 \pm 3.6325{\rm i} \, (1.77{\rm e}+01), \\
&	\lambda_5		\;	=	\;	-2.9203 \pm 5.4850{\rm i} \, (2.69{\rm e} + 01), 		\quad\;\;		\lambda_{10}	\;	=	\;	-2.7973 \pm 11.0533{\rm i} \, (1.54{\rm e}+01), 
	\end{split}
\end{equation*}
where $\lambda_j$ denotes the $j$th most dominant pole.

\begin{figure} % [t]
 \centering
	\begin{tabular}{ll}
		\hskip -3ex
			\includegraphics[width = .51\textwidth]{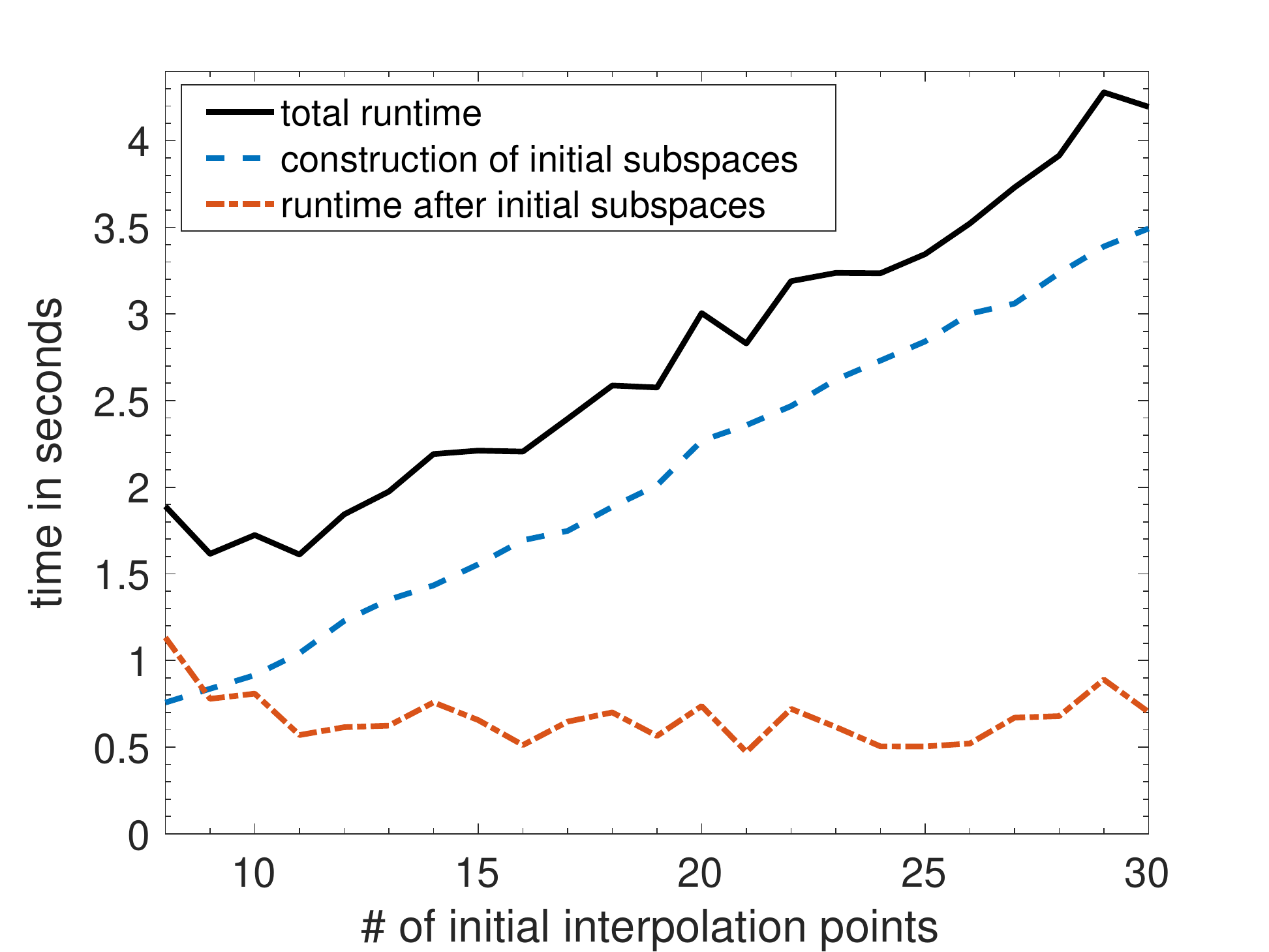} & 
			\includegraphics[width = .51\textwidth]{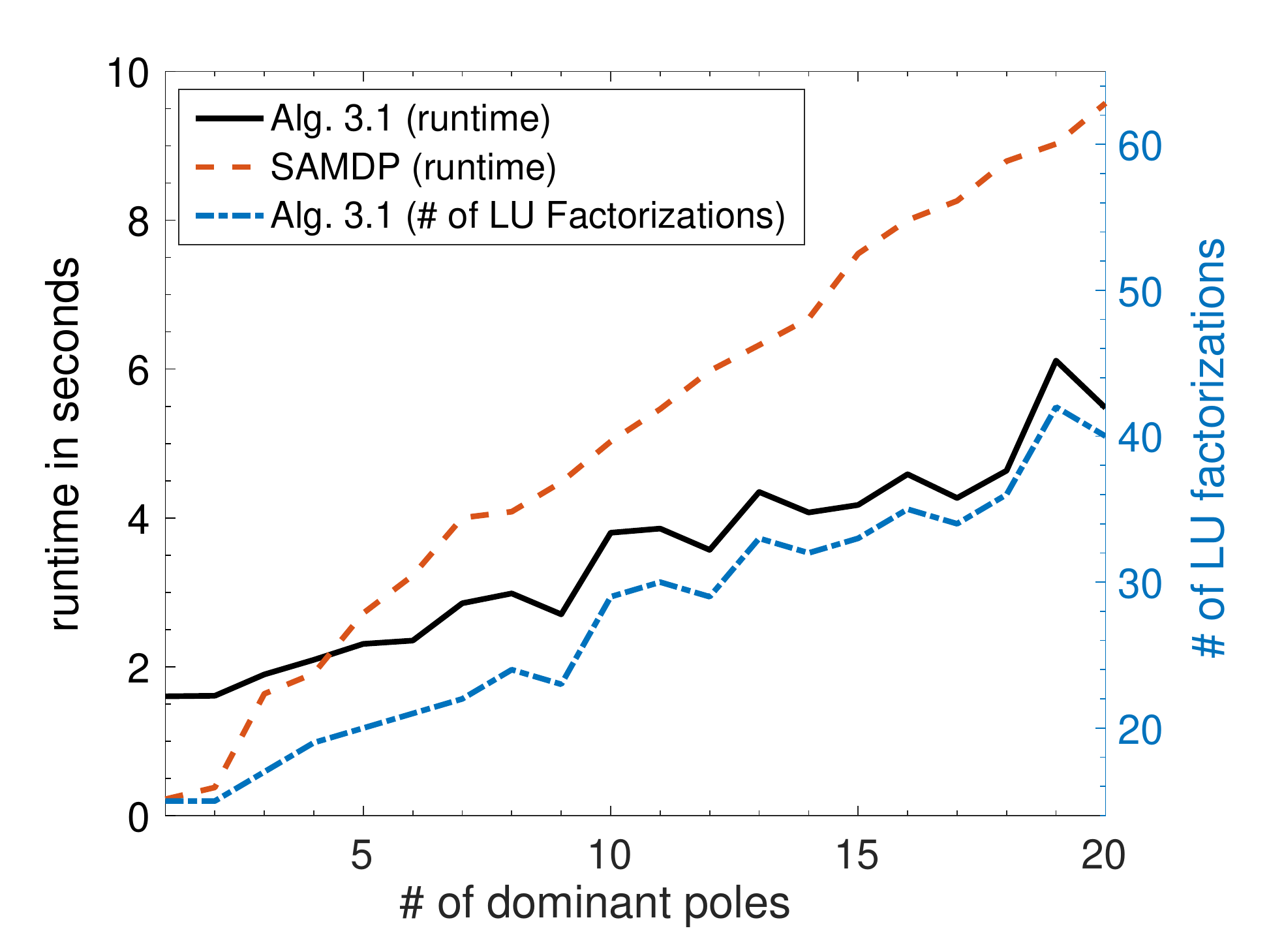}
	\end{tabular}
		\caption{  	The performance on the \texttt{bips98\_1450} example as a function of
		number of initial interpolation points (left), and
		as a function of the number of dominant poles sought (right).}
		\label{fig:nde_sdim}
\end{figure}

\section{Concluding Remarks}
The dominant poles of the transfer function of a descriptor system is used in the literature for model
order reduction. Additionally, dominant poles provide information about how the transfer function
behaves when it is restricted to the imaginary axis, in particular about regions on the imaginary axis 
where the transfer function attains large norm. Hence, it is plausible to initialize the
algorithms for large-scale ${\mathcal L}_\infty$-norm computation based on information
retrieved from dominant poles.

Here, we have proposed an interpolatory subspace framework to estimate a prescribed number
of dominant poles for a descriptor system. At every iteration, the dominant poles of
a projected small problem is computed using standard eigenvalue solvers such as the ones
based on the QZ algorithm. Then the projection subspaces are expanded so that the 
transfer function of the projected problem after expansion Hermite interpolates the 
original transfer function at these computed dominant poles. We have shown that
the proposed framework converges at least at a quadratic rate under mild assumptions,
and verified this result on real benchmark examples. Our numerical experiments indicate
that on benchmark examples the framework locates the dominant poles more reliably in 
comparison to SAMDP \cite{RM2006b}, one of the existing methods for dominant pole estimation.

It may be possible to extend the framework introduced here to more general class of
transfer functions beyond rational functions, such as the transfer functions associated 
with delay systems. 
% In our experiments, we have interpolated only the first two derivatives
% of the transfer functions. Our limited experience is that interpolating higher order derivatives
% causes numerical instabilities, though this issue is worth investigating as, if numerical
%stability can be ensured, it can result in even quicker convergence reducing the number
% of LU decompositions needed. 
Moreover, a careful incorporation of the framework here for 
dominant pole estimation to initialize the algorithms for large-scale ${\mathcal L}_\infty$-norm 
computation may be an important step. It may pave the way for accurate and efficient
computation of ${\mathcal L}_\infty$ norm for descriptor systems of large order. 

\smallskip

\textbf{Software.} A Matlab implementation of Algorithm \ref{alg:SM} taking into account the
practical issues discussed in Section \ref{sec:prac_details} is publicly available on the web at
\url{https://zenodo.org/record/5103430}.

This implementation can be run on the benchmark examples in Section \ref{sec:numexp_bench}
using the script \texttt{demo\_on\_benchmarks}. 

%Synthetic random linear dissipative Hamiltonian
%systems as in Section \ref{sec:stab_rad} can be generated using the routine \texttt{randomDH}
%with the command \texttt{randomDH(500,2,2)}.

\smallskip

\textbf{Acknowledgements.} The author is grateful to two anonymous referees who provided
invaluable feedback on the initial version of this manuscript.

%.  \bibliographystyle{plain}

\bibliography{DP}

\end{document}